\newtheorem {Theorem}  {Theorem}
\newtheorem {Corollary} {Corollary}
\newtheorem {Problem} {Problem}
\begin{document}
\baselineskip = 15pt
\bibliographystyle{plain}

\title{Translational Aperiodic Sets of $7$ Polyominoes}
\date{}
\author{Chao Yang\\ 
              School of Mathematics and Statistics\\
              Guangdong University of Foreign Studies, Guangzhou, 510006, China\\
              sokoban2007@163.com, yangchao@gdufs.edu.cn\\
              \\
        Zhujun Zhang\\
              Big Data Center of Fengxian District, Shanghai, 201499, China\\
              zhangzhujun1988@163.com
              }
\maketitle

\begin{abstract}
Recently, two extraordinary results on aperiodic monotiles have been obtained in two different settings. One is a family of aperiodic monotiles in the plane discovered by Smith, Myers, Kaplan and Goodman-Strauss in 2023, where rotation is allowed, breaking the 50-year-old record (aperiodic sets of two tiles found by Roger Penrose in the 1970s) on the minimum size of aperiodic sets in the plane. The other is the existence of an aperiodic monotile in the translational tiling of $\mathbb{Z}^n$ for some huge dimension $n$ proved by Greenfeld and Tao. This disproves the long-standing periodic tiling conjecture. However, it is known that there is no aperiodic monotile for translational tiling of the plane. The smallest size of known aperiodic sets for translational tilings of the plane is $8$, which was discovered more than $30$ years ago by Ammann. In this paper, we prove that translational tiling of the plane with a set of $7$ polyominoes is undecidable. As a consequence of the undecidability, we have constructed a family of aperiodic sets of size $7$ for the translational tiling of the plane. This breaks the 30-year-old record of Ammann.
\end{abstract}

\noindent{\textbf{Keywords}}:
tiling, translation, aperiodicity, undecidability\\
MSC2020: 52C20, 68Q17

\section{Introduction}\label{sec_intro}

Aperiodicity and undecidability are two closely related phenomena in tiling. In general, under the same conditions, undecidability implies aperiodicity. But the reverse is not necessarily true. The study of aperiodicity and undecidability dates back to the problem of tiling the plane with Wang tiles, introduced by Hao Wang \cite{wang61}. A \textit{Wang tile} is a unit square with each edge assigned a color. Given a finite set of Wang tiles (see Figure \ref{fig_w3} for an example), Wang considered the problem of tiling the entire plane with translated copies from the set, under the conditions that the tiles must be edge-to-edge and the color of common edges of any two adjacent Wang tiles must be the same. This is known as \textit{Wang's domino problem}.


\begin{figure}[ht]
\begin{center}
\begin{tikzpicture}[scale=0.5]

\draw [ fill=green!50] (0,0)--(2,2)--(2,0)--(0,0);
\draw [ fill=red!50] (0,0)--(2,-2)--(2,0)--(0,0);
\draw [ fill=red!50] (4,0)--(2,2)--(2,0)--(4,0);
\draw [ fill=yellow!50] (4,0)--(2,-2)--(2,0)--(4,0);

\draw [ fill=yellow!50] (6+0,0)--(6+2,2)--(6+2,0)--(6+0,0);
\draw [ fill=blue!50] (6+0,0)--(6+2,-2)--(6+2,0)--(6+0,0);
\draw [ fill=blue!50] (6+4,0)--(6+2,2)--(6+2,0)--(6+4,0);
\draw [ fill=red!50] (6+4,0)--(6+2,-2)--(6+2,0)--(6+4,0);

\draw [ fill=red!50] (12+0,0)--(12+2,2)--(12+2,0)--(12+0,0);
\draw [ fill=yellow!50] (12+0,0)--(12+2,-2)--(12+2,0)--(12+0,0);
\draw [ fill=yellow!50] (12+4,0)--(12+2,2)--(12+2,0)--(12+4,0);
\draw [ fill=green!50] (12+4,0)--(12+2,-2)--(12+2,0)--(12+4,0);

\end{tikzpicture}
\end{center}
\caption{A set of $3$ Wang tiles} \label{fig_w3}
\end{figure}
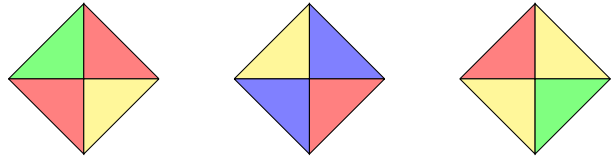

Wang's domino problem was shown to be undecidable by Berger \cite{b66}. As a crucial part of the proof of undecidability, Berger found a set of Wang tiles that tiles the plane but only tiles the plane non-periodically. Such a set is called \textit{aperiodic}. By combining the facts of the existence of an aperiodic set of Wang tiles and the ability to simulate the Turing machine with Wang tiles, Berger managed to show that Wang's domino problem is undecidable.

\begin{Theorem}[\cite{b66}]\label{thm_berger}
    Wang's domino problem is undecidable.
\end{Theorem}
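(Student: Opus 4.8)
The plan is to prove undecidability by a many-one reduction from an already-undecidable problem, namely the halting problem for Turing machines started on blank tape: from a machine $M$ we will effectively build a finite set $T_M$ of Wang tiles such that $T_M$ tiles the plane if and only if $M$ does \emph{not} halt. A decision procedure for Wang's domino problem would then decide halting, a contradiction.

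The first ingredient is the local simulation of computation. I would show how to encode an instantaneous description of $M$ (tape contents, head location, internal state) along a horizontal row of tiles, and design a family of ``computation tiles'' whose colour-matching rules force the row immediately above a given row to encode exactly the successor configuration under $M$'s transition function. This is a routine but careful piece of bookkeeping: one needs tiles carrying tape symbols, tiles carrying (state, symbol) pairs at the head, and left/right ``signal'' tiles that propagate the head's motion. The subtle point — and the reason this step alone is not enough — is that a tiling of the whole plane need not contain any row representing the \emph{initial} configuration of $M$, so the bare computation tiles would tile the plane regardless of whether $M$ halts.

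To force computations to start afresh, I would first construct a fixed aperiodic set of Wang tiles whose every tiling exhibits a forced hierarchical structure: nested axis-aligned square blocks of side $2^k$ for every $k$, each with a distinguished corner and clean boundary, in the spirit of Robinson's hierarchical construction. Each block of side $2^k$ supplies a bounded rectangular arena, anchored at its corner, in which a simulation of $M$ of length roughly $2^k$ can be laid out. Taking a product of this ``skeleton'' tile set with the computation tiles and merging the matching rules forces $M$ to be simulated from its initial configuration inside every block; since blocks of every size are forced to occur in every tiling, computations of every finite length are forced to appear. Finally I would include a ``halt'' tile that admits no legal placement: if $M$ halts after $s$ steps, then in every block of side $2^k > s$ the forced simulation reaches a halting state and no tile can legally continue it, so $T_M$ admits no tiling; if $M$ runs forever, the skeleton together with the non-terminating simulation tiles the plane. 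This establishes the reduction, and undecidability of the halting problem yields the theorem.

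The main obstacle is the skeleton: one must exhibit explicit matching rules that genuinely \emph{force} the nested-square hierarchy in every tiling rather than merely permit it, manage the corner and boundary bookkeeping so that the embedded computation is reset precisely at each block's corner and nowhere else, and verify that when $M$ does not halt a consistent global tiling really exists. Once the aperiodic skeleton and the product construction are established, the remaining steps — the local simulation tiles and the halting-tile gadget — are standard, and the reduction from the halting problem is immediate.
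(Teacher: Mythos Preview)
The paper does not actually prove Theorem~\ref{thm_berger}; it is stated as a known result with citation \cite{b66}, accompanied only by a one-sentence summary: Berger combined the existence of an aperiodic set of Wang tiles with the ability to simulate Turing machines in Wang tiles. Your proposal is consistent with that summary and fleshes it out along the now-standard Robinson line (hierarchical aperiodic skeleton forcing nested square arenas, computation tiles embedded within them, a halt tile that obstructs tiling exactly when the machine halts). There is therefore nothing substantive in the paper to compare against beyond that sentence; your sketch is a correct high-level account of the Robinson simplification of Berger's original, considerably more involved, construction.
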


Since Berger's proof of the undecidability of Wang's domino problem, aperiodicity and undecidability of tiling problems in different settings received extensive study. For a more comprehensive introduction to aperiodic tiling sets and their applications in quasicrystal, we refer to the books \cite{bg13, gs16}. We mention two remarkable recent results on aperiodic tilings in two different settings here. For tiling of the plane where rotation is allowed, Penrose first found an aperiodic set of two tiles \cite{p79}. That was the record holder for the smallest aperiodic sets in the setting of general tiling (i.e., rotation is allowed) of the plane for more than 50 years until a family of aperiodic monotiles was discovered by Smith, Myers, Kaplan, and Goodman-Strauss \cite{smith23a,smith23b} in 2023. Another remarkable result is for the translational tiling of $\mathbb{Z}^n$.  Greenfeld and Tao disprove the periodic tiling conjecture \cite{gs16, lw96, s74} by showing the existence of an aperiodic monotile in the translational tiling of $\mathbb{Z}^n$ in some extremely large dimension $n$ \cite{gt24a}. 

This paper focuses on the aperiodicity and undecidability of the following problem, which is a specific case of translational tiling of $\mathbb{Z}^n$.

\begin{Problem}[Translational Tiling Problem with $k$ Polyominoes] \label{pro_main}
Let $k$ be a fixed positive integer. Given a set $S$ of $k$ polyominoes, is there an algorithm to decide whether the entire plane can be tiled by translated copies of tiles in $S$?
\end{Problem}

If a polyomino can tile the plane, then it can always tile the plane periodically. As a result, the translational tiling problem with a single polyomino (i.e., $k$=1) is decidable \cite{bn91,b20, gt21, w15}. On the other hand, Ollinger initiated the study of the undecidability of the translational tiling problem with a fixed number of polyominoes in 2009, and he showed that the problem is undecidable for $k=11$ \cite{o09}. Since the size of the smallest aperiodic set of polyominoes for translational tiling of plane is 8 of Ammann's set \cite{ags92} from the 1990s (in fact, according to \cite{s04}, the discovery of Ammann appeared even earlier in unpublished private communication with Martin Gardner in 1976), Ollinger suggested investigating the undecidability of the translational tiling problem with $k$ polyominoes for $k=8,9,10$. Yang and Zhang confirmed that the translational tiling problems with $k$ polyominoes are indeed undecidable for $k=8,9,10$ in a series of works \cite{yang23, yang23b, yz24}.

The main contribution of this paper is to show that the translational tiling problem with $7$ polyominoes is also undecidable (Theorem \ref{thm_main}).

\begin{Theorem}[Undecidability with Seven Polyominoes]\label{thm_main}
    Translational tiling of the plane with a set of $7$ polyominoes is undecidable.
\end{Theorem}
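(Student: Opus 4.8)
The plan is to follow the now-standard route for proving undecidability of translational tiling with few polyominoes: reduce from a known undecidable problem via a chain of tiling-simulation gadgets, and squeeze the tile count down to $7$ by careful encoding. Concretely, I would reduce from the halting problem (or from Wang's domino problem, Theorem~\ref{thm_berger}, or from the $2$-counter machine / Minsky machine halting problem, whichever is most economical in tile count). The overall architecture mirrors Ollinger's original $k=11$ construction \cite{o09} and the subsequent improvements of Yang and Zhang for $k=8,9,10$ \cite{yang23,yang23b,yz24}: first encode the computation of a Turing-equivalent device as a ``coloured'' or ``signalled'' tiling constraint, then realise each colour/signal as a geometric feature (a bump, notch, or offset) on the boundary of a polyomino so that the only way translated copies can interlock is to respect the intended adjacency rules.

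The key steps, in order, would be: (1) fix a suitable master problem whose undecidability is known and whose instances are combinatorially lean --- I expect a deterministic machine with a small instruction set (e.g.\ a $2$-counter machine, or Wang tiles with a bounded palette) to be the right starting point, since every bit of structure in the source problem tends to cost tiles downstream; (2) design a ``skeleton'' or ``grid'' sub-assembly out of a constant number of polyominoes that forces any tiling to decompose into rows and columns representing successive configurations of the machine, so that space is the tape/counters and one spatial direction is time; (3) encode the transition function into the shapes: each polyomino's silhouette carries, along matched edges, the local rule data, so that fitting two pieces without gaps or overlaps is equivalent to a legal transition; here one reuses the trick that in \emph{translation-only} tilings a polyomino's boundary can act like a set of edge colours provided the bumps are placed on a fine enough sub-lattice and are pairwise ``key-lock'' compatible; (4) verify completeness (a halting, or non-halting, computation yields a valid tiling of the whole plane) and soundness (any valid tiling of the plane decodes to a valid, hence non-terminating, computation), which together give undecidability; (5) do the bookkeeping that shows the construction uses exactly $7$ distinct polyomino shapes --- this is where one merges several logical roles onto a single tile, e.g.\ combining the grid-delimiter tile with a signal-carrier, or using one ``filler'' polyomino to absorb several different blank regions by giving it a sufficiently generic boundary.

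I expect the main obstacle to be precisely step~(5) combined with maintaining soundness: every time two gadgets are fused into one polyomino to save a tile, one risks creating \emph{unintended} tilings --- spurious ways the merged shape can fit together that do not correspond to any computation --- and ruling these out requires a delicate case analysis of how the finitely many shapes can locally meet. The art is to choose the bump/notch alphabet and the sub-lattice scale so that the ``rigidity'' of the assembly is preserved: one wants a lemma of the form ``in any tiling, around any tile the neighbouring tiles are forced up to the global degrees of freedom (translation of a whole row/column)'', and then the computation-encoding argument rides on top of that rigidity lemma. A secondary difficulty is handling the boundary between the computed region and the ``don't care'' region of the plane (the part not carrying computation) so that it can always be completed to a full-plane tiling exactly when the computation is non-halting; this typically needs a small family of filler polyominoes whose interaction with the skeleton is tightly controlled, and shaving these down is often what separates a $k$-tile construction from a $(k-1)$-tile one.

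Finally, once Theorem~\ref{thm_main} is in hand, the aperiodic-set consequence advertised in the abstract follows by the usual compactness-plus-undecidability argument: if every tiling by our $7$ polyominoes were eventually periodic, one could semi-decide both tileability (search for a periodic tiling) and non-tileability (by a König's-lemma / finite-patch obstruction argument), contradicting undecidability; hence there must exist instances --- in fact an explicit family obtained by plugging in a fixed non-halting machine with no periodic computation --- that tile the plane only aperiodically, yielding aperiodic sets of size $7$. I would state this as a corollary after the proof of Theorem~\ref{thm_main}.
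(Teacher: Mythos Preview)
Your outline is a faithful description of the genre --- reduce from Wang's domino problem, build a rigid skeleton, encode colours as bumps, verify soundness and completeness, count tiles --- and your discussion of the aperiodicity corollary is correct and matches the paper's own argument. But as a proof of Theorem~\ref{thm_main} it has a genuine gap at exactly the point you flag as the ``main obstacle'': step~(5). You say you would ``merge several logical roles onto a single tile'' and ``shave down'' the fillers, but that is a description of what success would look like, not a mechanism for achieving it. The constructions for $k=11,10,9,8$ you cite already merge roles aggressively and already use a single encoder polyomino carrying all the Wang tiles; further incremental merging along those lines does not obviously yield $7$.

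The paper's proof supplies the missing idea, which it calls the \emph{interlacing method}. In all previous reductions the simulated Wang tiles, though packed into one encoder polyomino, occupy \emph{disjoint} regions of it; selecting tile $i$ means aligning region $i$ with the skeleton. Here instead the colour-encoding building blocks for all $n$ Wang tiles are \emph{interleaved} along the encoder: tile $1$'s bits sit at positions $0, 2n, 4n, \ldots$, tile $2$'s at positions $2, 2n+2, \ldots$, and so on. A horizontal shift of the encoder by $2i$ building-block widths between two rigidly placed connectors then exposes exactly tile $i$'s bits to the linkers above and below. Because the linkers only need to span one segment (width $2n$) rather than the whole encoder, and because the connector can absorb what would otherwise be two separate structural tiles, the auxiliary tile count drops to six (two linkers, two bigger fillers, one connector, one tiny filler), giving seven total. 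The rigidity lemma you anticipate is then proved by a careful case analysis showing that connectors are forced into a fixed lattice with spacing exactly $2t+1$ segments, which is where the segment length and the building blocks $A$, $B$, $a$, $b$ earn their keep. None of this is foreshadowed in your proposal, and without it --- or some equally concrete alternative encoding --- the plan does not reach $k=7$.
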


As a corollary of Theorem \ref{thm_main}, there exist aperiodic sets of $7$ polyominoes. This is the first improvement in the size of the smallest aperiodic sets for translational tiling of the plane for more than 30 years.

\begin{Corollary}[Aperiodicity with Seven Polyominoes]\label{cor_main}
    There exist sets of $7$ polyominoes such that each set can tile the plane with translated copies, but only non-periodically.
\end{Corollary}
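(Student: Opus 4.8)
The plan is to deduce Corollary~\ref{cor_main} from Theorem~\ref{thm_main} by the classical argument, going back to Wang and Berger, that undecidability of a tiling problem forces the existence of an aperiodic set. Suppose, for contradiction, that no set of $7$ polyominoes is aperiodic; that is, every set $S$ of $7$ polyominoes that admits a tiling of the plane by translated copies also admits a \emph{periodic} one (invariant under a rank-two lattice of translations). Under this assumption I claim that the translational tiling problem with $7$ polyominoes becomes decidable, contradicting Theorem~\ref{thm_main}, and the contradiction forces the existence of a set of $7$ polyominoes that tiles the plane but only non-periodically.

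The decision procedure runs two exhaustive searches in parallel on input $S$. The first search enumerates all candidate periodic tilings: such a tiling is specified by a finite amount of data, namely a rank-two sublattice $\Lambda\subseteq\mathbb{Z}^2$ together with an assignment of tiles of $S$ to the finitely many cells of the torus $\mathbb{Z}^2/\Lambda$, and checking whether such an assignment induces a genuine non-overlapping exact cover of $\mathbb{Z}^2$ is a finite computation; if $S$ admits a periodic tiling this search halts and outputs ``tiles.'' The second search looks for a finite obstruction: for $n=1,2,3,\dots$ it checks, by brute force over the finitely many placements of translated copies of the (bounded) tiles of $S$ that can meet the box $[-n,n]^2$, whether that box can be exactly covered by a partial non-overlapping placement, and it outputs ``does not tile'' at the first $n$ for which no such placement exists. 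A standard compactness argument (K\"onig's lemma applied to the finitely branching tree of partial placements that exactly cover $[-n,n]^2$, ordered by inclusion of tiles) shows that if $S$ does not tile the plane then some such $n$ exists, so the second search halts in that case. Under the contradiction hypothesis at least one of the two searches halts for every $S$, so running them together decides the problem — contradiction. Hence some set of $7$ polyominoes tiles the plane yet admits no periodic tiling; by definition it tiles the plane only non-periodically.

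I expect no real obstacle in this deduction: the two delicate points — phrasing the compactness argument correctly for polyominoes (which, unlike Wang tiles, have bounded but non-unit support, so the tree of partial placements meeting $[-n,n]^2$ is still finitely branching) and verifying that the enumeration of periodic tilings is effective ($\Lambda$ and the torus pattern range over recursively enumerable sets, and validity of a candidate is decidable) — are both routine. The one caveat worth flagging is that this argument is \emph{non-constructive}: it certifies that an aperiodic set of $7$ polyominoes exists without exhibiting one. To obtain the explicit family promised in the abstract one instead inspects the reduction behind Theorem~\ref{thm_main}: the gadget set of $7$ polyominoes used there should be engineered so that for a suitable instance of the undecidable source problem — one with no ``halting'' configuration but arbitrarily long finite computations — the resulting set tiles the plane while every tiling encodes an infinite, eventually non-repeating computation history and is therefore non-periodic. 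Making this last step rigorous (identifying such an instance and verifying that all its tilings are aperiodic) is where the genuine work lies, but for the corollary as stated the abstract existence argument above suffices.
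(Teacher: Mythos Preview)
Your argument is correct and is precisely the classical Wang--Berger deduction that undecidability of a tiling problem forces an aperiodic instance; the paper even acknowledges this implication in the introduction. However, the paper does \emph{not} prove the corollary this way. Instead it takes the constructive route you only sketch parenthetically at the end: it fixes a known aperiodic set $W$ of Wang tiles, applies the explicit reduction from the proof of Theorem~\ref{thm_main} to produce the associated set $P$ of seven polyominoes, and concludes that $P$ is aperiodic because $W$ is. Your approach has the advantage of using only the \emph{statement} of Theorem~\ref{thm_main} and a self-contained compactness/semidecidability argument, with no need to look inside the reduction; the paper's approach buys explicit aperiodic sets (one for each aperiodic Wang tile set) rather than a bare existence claim, but it tacitly relies on an extra feature of the reduction---that a periodic tiling by $P$ would yield a periodic tiling by $W$, which follows from the rigid connector lattice established in the proof of Theorem~\ref{thm_main}. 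One small correction to your final paragraph: the paper does not need to manufacture a special non-halting Turing-machine instance; since the reduction is from Wang's domino problem, any off-the-shelf aperiodic Wang tile set already does the job.
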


Like many other undecidable results on tiling problems \cite{dl24,gt23, gt24b,jr12,o09,yang23,yang23b,yz24,yz24a}, our main result (Theorem \ref{thm_main}) is proved by reduction from Wang's domino problem. The proof of Theorem \ref{thm_main} not only incorporates techniques we have developed in our previous work to show the undecidability of translational tiling problems in $3$-dimensional and $4$-dimensional spaces \cite{yz24b,yz24c,yz24d}, but also makes use of novel techniques that are introduced in the current paper. One of the key novel techniques is an \textbf{interlacing method} for simulating the Wang tiles in a single polyomino. This is significantly different from all previous reduction methods where distinct simulated Wang tiles are separated from each other in distinct parts of a polyomino.

The remainder of the paper is organized as follows. In Section \ref{sec_proof}, we prove Theorem \ref{thm_main} and Corollary \ref{cor_main}. Section \ref{sec_conclu} concludes with a few remarks on future work.

\section{Proof of the Main Results}\label{sec_proof}

Theorem \ref{thm_main} will be proved by reduction from Wang's domino problem in this section. For each instance of Wang's domino problem (i.e., a set of Wang tiles), we will construct a set of $7$ polyominoes that can tile the plane if and only if the corresponding set of Wang tiles can tile the plane.

\subsection{Building Blocks}
We begin with introducing a few building blocks for the construction of the more complex polyominoes. A \textit{polyomino} is the union of a finite number of unit squares gluing together edge-to-edge. A $10\times 10$ polyomino is called a \textit{functional square}. A functional square is a building block. By adding bumps or dents to functional squares, we obtain more building blocks.
 
The first two pairs of building blocks, $L$ and $l$, and $R$ and $r$, are illustrated in Figure \ref{fig_lr}. They are used for the binary encoding of the colors of Wang tiles. In all figures of this subsection (Figures \ref{fig_lr} to \ref{fig_b}), a gray square represents a unit square, and a white square means empty in that location. The building blocks $l$ and $r$ are functional squares with some vacant areas inside. The shape of the vacant areas is identical for the building blocks $l$ and $r$, and the only difference is the position of the vacant areas. As their names indicate, the vacant area of building block $l$ is on the left half of the functional square, while the vacant area is on the right half for the building block $r$. The building blocks $l$ and $r$ are disconnected by themselves. However, they will be attached to other building blocks to form a bigger and connected polyomino in the next subsection. Building blocks $L$ and $R$ can be viewed as the complement (with respect to a $10\times 10$ functional square) of building blocks $l$ and $r$, respectively. In other words, building blocks $l$ and $L$ (or $r$ and $R$) can be put together to form a $10\times 10$ functional square perfectly. The difference of building blocks $L$ and $R$ only makes sense when they are attached to other building blocks. When they are used independently as one of the $7$ polyominoes that will be defined in the next subsection, they are identical and referred to as the \textit{tiny filler} or $T$-filler.

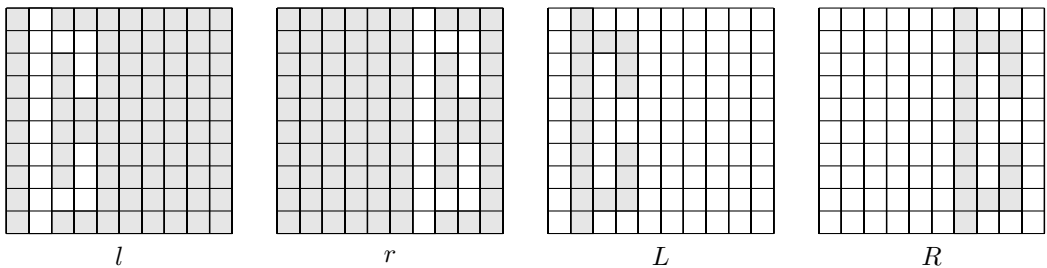
\begin{figure}[H]
\begin{center}
\begin{tikzpicture}[scale=0.3]

\foreach \x in {0}
\foreach \y in {0} 
{
\draw [fill=gray!20] (\x+0,10+\y)--(\x+10,10+\y)--(\x+10,0+\y)--(\x+0,0+\y)--(\x+0,10+\y);
}

\foreach \x in {0}
\foreach \y in {0} 
{
\draw [fill=white!20] (\x+1,\y)--(\x+1,10+\y)--(\x+2,10+\y)--(\x+2,9+\y)--(\x+4,9+\y)--(\x+4,6+\y)--(\x+3,6+\y)--(\x+3,8+\y)--(\x+2,8+\y)--(\x+2,2+\y)--(\x+3,2+\y)--(\x+3,4+\y)--(\x+4,4+\y)--(\x+4,1+\y)--(\x+2,1+\y)--(\x+2,\y)--(\x+1,\y);
}

\foreach \x in {12}
\foreach \y in {0} 
{
\draw [fill=gray!20] (\x+0,10+\y)--(\x+10,10+\y)--(\x+10,0+\y)--(\x+0,0+\y)--(\x+0,10+\y);
}

\foreach \x in {17}
\foreach \y in {0} 
{
\draw [fill=white!20] (\x+1,\y)--(\x+1,10+\y)--(\x+2,10+\y)--(\x+2,9+\y)--(\x+4,9+\y)--(\x+4,6+\y)--(\x+3,6+\y)--(\x+3,8+\y)--(\x+2,8+\y)--(\x+2,2+\y)--(\x+3,2+\y)--(\x+3,4+\y)--(\x+4,4+\y)--(\x+4,1+\y)--(\x+2,1+\y)--(\x+2,\y)--(\x+1,\y);
}

\foreach \x in {24}
\foreach \y in {0} 
{
\draw [fill=gray!20] (\x+1,\y)--(\x+1,10+\y)--(\x+2,10+\y)--(\x+2,9+\y)--(\x+4,9+\y)--(\x+4,6+\y)--(\x+3,6+\y)--(\x+3,8+\y)--(\x+2,8+\y)--(\x+2,2+\y)--(\x+3,2+\y)--(\x+3,4+\y)--(\x+4,4+\y)--(\x+4,1+\y)--(\x+2,1+\y)--(\x+2,\y)--(\x+1,\y);
}

\foreach \x in {41}
\foreach \y in {0} 
{
\draw [fill=gray!20] (\x+1,\y)--(\x+1,10+\y)--(\x+2,10+\y)--(\x+2,9+\y)--(\x+4,9+\y)--(\x+4,6+\y)--(\x+3,6+\y)--(\x+3,8+\y)--(\x+2,8+\y)--(\x+2,2+\y)--(\x+3,2+\y)--(\x+3,4+\y)--(\x+4,4+\y)--(\x+4,1+\y)--(\x+2,1+\y)--(\x+2,\y)--(\x+1,\y);
}

\foreach \x in {0,12,24,36}
\foreach \y in {0,...,10} 
{ 
\draw  (\x+0,0+\y)--(\x+10,0+\y);
\draw  (\x,0)--(\x,10);
\draw  (\x+10,0)--(\x+10,10);
\draw  (\x+9,0)--(\x+9,10);
\draw  (\x+8,0)--(\x+8,10);
\draw  (\x+7,0)--(\x+7,10);
\draw  (\x+6,0)--(\x+6,10);
\draw  (\x+5,0)--(\x+5,10);
\draw  (\x+4,0)--(\x+4,10);
\draw  (\x+3,0)--(\x+3,10);
\draw  (\x+2,0)--(\x+2,10);
\draw  (\x+1,0)--(\x+1,10);
}

\node at (5,-1) {$l$};  \node at (17,-1) {$r$};  \node at (29,-1) {$L$};  \node at (41,-1) {$R$};

\end{tikzpicture}
\end{center}
\caption{Building blocks $l$, $r$, $L$ and $R$.}\label{fig_lr}
\end{figure}

The next two pairs of building blocks, $Y^+$ and $y^+$, and $Y^-$ and $y^-$, are illustrated in Figure \ref{fig_y}. These building blocks are used for ensure the connection of polyominoes in the direction of the vertical axis of the plane ($y$-axis). The building blocks $Y^+$ and $Y^-$ are functional squares with a bump on the north and south sides, respectively. The building blocks $y^+$ and $y^-$ are functional squares with a dent that matches the bump of the building blocks $Y^+$ and $Y^-$, respectively.

The last three pairs of building blocks, $X$ and $x$, $A$ and $a$, and $B$ and $b$, are illustrated in Figure \ref{fig_x}, Figure \ref{fig_a} and Figure \ref{fig_b}, respectively. They are used for ensuring the connection of the polyominoes in the horizontal direction of the plane (direction of the $x$-axis). Each pair of these three building blocks has a unique shape of bump or dent that can only be matched with each other. In other words, bumps or dents of different pairs cannot be matched.


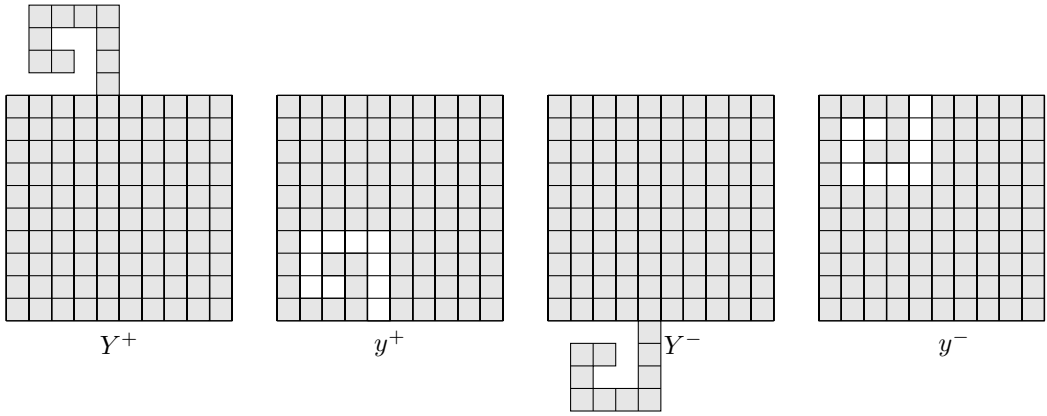
\begin{figure}[H]
\begin{center}
\begin{tikzpicture}[scale=0.3]

\foreach \x in {0}
\foreach \y in {0} 
{
\draw [fill=gray!20] (\x+0,10+\y)--(\x+10,10+\y)--(\x+10,0+\y)--(\x+0,0+\y)--(\x+0,10+\y);
}

\foreach \x in {0}
\foreach \y in {0} 
{
\draw [fill=gray!20] (\x+4,10+\y)--(\x+4,13+\y)--(\x+2,13+\y)--(\x+2,12+\y)--(\x+3,12+\y)--(\x+3,11+\y)--(\x+1,11+\y)--(\x+1,14+\y)--(\x+5,14+\y)--(\x+5,10+\y)--(\x+4,10+\y);
\draw (\x+4,11+\y)--(\x+5,11+\y);
\draw (\x+4,12+\y)--(\x+5,12+\y);
\draw (\x+4,13+\y)--(\x+5,13+\y);
\draw (\x+1,12+\y)--(\x+2,12+\y);
\draw (\x+1,13+\y)--(\x+2,13+\y);

\draw (\x+2,11+\y)--(\x+2,12+\y);
\draw (\x+2,13+\y)--(\x+2,14+\y);
\draw (\x+3,13+\y)--(\x+3,14+\y);
\draw (\x+4,13+\y)--(\x+4,14+\y);
}

\foreach \x in {12}
\foreach \y in {0} 
{
\draw [fill=gray!20] (\x+0,10+\y)--(\x+10,10+\y)--(\x+10,0+\y)--(\x+0,0+\y)--(\x+0,10+\y);
}

\foreach \x in {12}
\foreach \y in {-10} 
{
\draw [fill=white!20] (\x+4,10+\y)--(\x+4,13+\y)--(\x+2,13+\y)--(\x+2,12+\y)--(\x+3,12+\y)--(\x+3,11+\y)--(\x+1,11+\y)--(\x+1,14+\y)--(\x+5,14+\y)--(\x+5,10+\y)--(\x+4,10+\y);
}

\foreach \x in {24}
\foreach \y in {0} 
{
\draw [fill=gray!20] (\x+0,10+\y)--(\x+10,10+\y)--(\x+10,0+\y)--(\x+0,0+\y)--(\x+0,10+\y);
}

\foreach \x in {24}
\foreach \y in {-10} 
{
\draw [fill=gray!20] (\x+4,10+\y)--(\x+4,7+\y)--(\x+2,7+\y)--(\x+2,8+\y)--(\x+3,8+\y)--(\x+3,9+\y)--(\x+1,9+\y)--(\x+1,6+\y)--(\x+5,6+\y)--(\x+5,10+\y)--(\x+4,10+\y);
\draw (\x+4,9+\y)--(\x+5,9+\y);
\draw (\x+4,8+\y)--(\x+5,8+\y);
\draw (\x+4,7+\y)--(\x+5,7+\y);
\draw (\x+1,8+\y)--(\x+2,8+\y);
\draw (\x+1,7+\y)--(\x+2,7+\y);

\draw (\x+2,9+\y)--(\x+2,8+\y);
\draw (\x+2,7+\y)--(\x+2,6+\y);
\draw (\x+3,7+\y)--(\x+3,6+\y);
\draw (\x+4,7+\y)--(\x+4,6+\y);
}

\foreach \x in {36}
\foreach \y in {0} 
{

\draw [fill=gray!20] (\x+0,10+\y)--(\x+10,10+\y)--(\x+10,0+\y)--(\x+0,0+\y)--(\x+0,10+\y);
}

\foreach \x in {36}
\foreach \y in {0} 
{
\draw [fill=white!20] (\x+4,10+\y)--(\x+4,7+\y)--(\x+2,7+\y)--(\x+2,8+\y)--(\x+3,8+\y)--(\x+3,9+\y)--(\x+1,9+\y)--(\x+1,6+\y)--(\x+5,6+\y)--(\x+5,10+\y)--(\x+4,10+\y);
}

\foreach \x in {0,12,24,36}
\foreach \y in {0,...,10} 
{ 
\draw  (\x+0,0+\y)--(\x+10,0+\y);
\draw  (\x,0)--(\x,10);
\draw  (\x+10,0)--(\x+10,10);
\draw  (\x+9,0)--(\x+9,10);
\draw  (\x+8,0)--(\x+8,10);
\draw  (\x+7,0)--(\x+7,10);
\draw  (\x+6,0)--(\x+6,10);
\draw  (\x+5,0)--(\x+5,10);
\draw  (\x+4,0)--(\x+4,10);
\draw  (\x+3,0)--(\x+3,10);
\draw  (\x+2,0)--(\x+2,10);
\draw  (\x+1,0)--(\x+1,10);
}

\node at (5,-1) {$Y^+$};  \node at (17,-1) {$y^+$};  \node at (30,-1) {$Y^-$};  \node at (42,-1) {$y^-$};

\end{tikzpicture}
\end{center}
\caption{Building blocks $Y^+$, $y^+$, $Y^-$ and $y^-$.}\label{fig_y}
\end{figure}


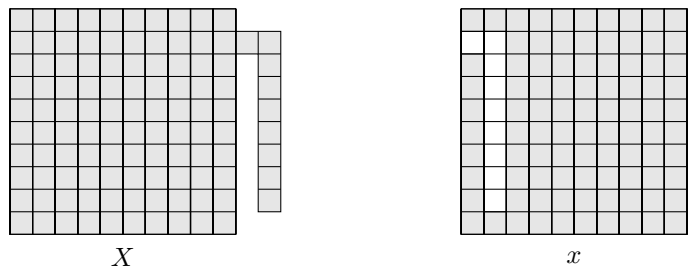
\begin{figure}[H]
\begin{center}
\begin{tikzpicture}[scale=0.3]

\foreach \x in {0}
\foreach \y in {0} 
{
\draw [fill=gray!20] (\x+0,10+\y)--(\x+10,10+\y)--(\x+10,0+\y)--(\x+0,0+\y)--(\x+0,10+\y);
}

\foreach \x in {0}
\foreach \y in {0} 
{
\draw [fill=gray!20] (\x+10,9+\y)--(\x+12,9+\y)--(\x+12,1+\y)--(\x+11,1+\y)--(\x+11,8+\y)--(\x+10,8+\y)--(\x+10,9+\y);

\draw (\x+11,8+\y)--(\x+11,9+\y);

\draw (\x+11,2+\y)--(\x+12,2+\y);
\draw (\x+11,3+\y)--(\x+12,3+\y);
\draw (\x+11,4+\y)--(\x+12,4+\y);
\draw (\x+11,5+\y)--(\x+12,5+\y);
\draw (\x+11,6+\y)--(\x+12,6+\y);
\draw (\x+11,7+\y)--(\x+12,7+\y);
\draw (\x+11,8+\y)--(\x+12,8+\y);

}

\foreach \x in {20}
\foreach \y in {0} 
{
\draw [fill=gray!20] (\x+0,10+\y)--(\x+10,10+\y)--(\x+10,0+\y)--(\x+0,0+\y)--(\x+0,10+\y);
}

\foreach \x in {10}
\foreach \y in {0} 
{
\draw [fill=white!20] (\x+10,9+\y)--(\x+12,9+\y)--(\x+12,1+\y)--(\x+11,1+\y)--(\x+11,8+\y)--(\x+10,8+\y)--(\x+10,9+\y);
}

\foreach \x in {0,20}
\foreach \y in {0,...,10} 
{ 
\draw  (\x+0,0+\y)--(\x+10,0+\y);
\draw  (\x,0)--(\x,10);
\draw  (\x+10,0)--(\x+10,10);
\draw  (\x+9,0)--(\x+9,10);
\draw  (\x+8,0)--(\x+8,10);
\draw  (\x+7,0)--(\x+7,10);
\draw  (\x+6,0)--(\x+6,10);
\draw  (\x+5,0)--(\x+5,10);
\draw  (\x+4,0)--(\x+4,10);
\draw  (\x+3,0)--(\x+3,10);
\draw  (\x+2,0)--(\x+2,10);
\draw  (\x+1,0)--(\x+1,10);
}

\node at (5,-1) {$X$};  \node at (25,-1) {$x$};

\end{tikzpicture}
\end{center}
\caption{Building blocks $X$ and $x$.}\label{fig_x}
\end{figure}


\begin{figure}[H]
\begin{center}
\begin{tikzpicture}[scale=0.3]

\foreach \x in {0}
\foreach \y in {0} 
{
\draw [fill=gray!20] (\x+0,10+\y)--(\x+10,10+\y)--(\x+10,0+\y)--(\x+0,0+\y)--(\x+0,10+\y);
}

\foreach \x in {0}
\foreach \y in {0} 
{
\draw [fill=gray!20] (\x+10,9+\y)--(\x+14,9+\y)--(\x+14,6+\y)--(\x+12,6+\y)--(\x+12,1+\y)--(\x+11,1+\y)--(\x+11,7+\y)--(\x+13,7+\y)--(\x+13,8+\y)--(\x+10,8+\y)--(\x+10,9+\y);

\draw (\x+11,8+\y)--(\x+11,9+\y);
\draw (\x+12,8+\y)--(\x+12,9+\y);
\draw (\x+13,8+\y)--(\x+13,9+\y);
\draw (\x+12,6+\y)--(\x+12,7+\y);
\draw (\x+13,6+\y)--(\x+13,7+\y);

\draw (\x+11,2+\y)--(\x+12,2+\y);
\draw (\x+11,3+\y)--(\x+12,3+\y);
\draw (\x+11,4+\y)--(\x+12,4+\y);
\draw (\x+11,5+\y)--(\x+12,5+\y);
\draw (\x+11,6+\y)--(\x+12,6+\y);
\draw (\x+13,7+\y)--(\x+14,7+\y);
\draw (\x+13,8+\y)--(\x+14,8+\y);

}

\foreach \x in {20}
\foreach \y in {0} 
{
\draw [fill=gray!20] (\x+0,10+\y)--(\x+10,10+\y)--(\x+10,0+\y)--(\x+0,0+\y)--(\x+0,10+\y);
}

\foreach \x in {10}
\foreach \y in {0} 
{
\draw [fill=white!20] (\x+10,9+\y)--(\x+14,9+\y)--(\x+14,6+\y)--(\x+12,6+\y)--(\x+12,1+\y)--(\x+11,1+\y)--(\x+11,7+\y)--(\x+13,7+\y)--(\x+13,8+\y)--(\x+10,8+\y)--(\x+10,9+\y);
}

\foreach \x in {0,20}
\foreach \y in {0,...,10} 
{ 
\draw  (\x+0,0+\y)--(\x+10,0+\y);
\draw  (\x,0)--(\x,10);
\draw  (\x+10,0)--(\x+10,10);
\draw  (\x+9,0)--(\x+9,10);
\draw  (\x+8,0)--(\x+8,10);
\draw  (\x+7,0)--(\x+7,10);
\draw  (\x+6,0)--(\x+6,10);
\draw  (\x+5,0)--(\x+5,10);
\draw  (\x+4,0)--(\x+4,10);
\draw  (\x+3,0)--(\x+3,10);
\draw  (\x+2,0)--(\x+2,10);
\draw  (\x+1,0)--(\x+1,10);
}

\node at (5,-1) {$A$};  \node at (25,-1) {$a$};

\end{tikzpicture}
\end{center}
\caption{Building blocks $A$ and $a$.}\label{fig_a}
\end{figure}
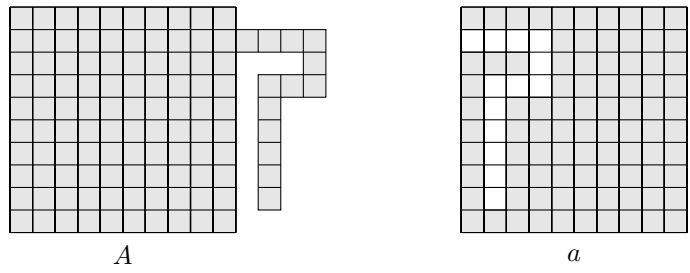


\begin{figure}[H]
\begin{center}
\begin{tikzpicture}[scale=0.3]

\foreach \x in {0}
\foreach \y in {0} 
{
\draw [fill=gray!20] (\x+0,10+\y)--(\x+10,10+\y)--(\x+10,0+\y)--(\x+0,0+\y)--(\x+0,10+\y);
}

\foreach \x in {0}
\foreach \y in {0} 
{
\draw [fill=gray!20] (\x+10,9+\y)--(\x+14,9+\y)--(\x+14,2+\y)--(\x+12,2+\y)--(\x+12,1+\y)--(\x+11,1+\y)--(\x+11,7+\y)--(\x+12,7+\y)--(\x+12,3+\y)--(\x+13,3+\y)--(\x+13,8+\y)--(\x+10,8+\y)--(\x+10,9+\y);

\draw (\x+11,8+\y)--(\x+11,9+\y);
\draw (\x+12,8+\y)--(\x+12,9+\y);
\draw (\x+13,8+\y)--(\x+13,9+\y);
\draw (\x+12,2+\y)--(\x+12,3+\y);
\draw (\x+13,2+\y)--(\x+13,3+\y);

\draw (\x+11,2+\y)--(\x+12,2+\y);
\draw (\x+11,3+\y)--(\x+12,3+\y);
\draw (\x+11,4+\y)--(\x+12,4+\y);
\draw (\x+11,5+\y)--(\x+12,5+\y);
\draw (\x+11,6+\y)--(\x+12,6+\y);
\draw (\x+13,7+\y)--(\x+14,7+\y);
\draw (\x+13,8+\y)--(\x+14,8+\y);
\draw (\x+13,3+\y)--(\x+14,3+\y);
\draw (\x+13,4+\y)--(\x+14,4+\y);
\draw (\x+13,5+\y)--(\x+14,5+\y);
\draw (\x+13,6+\y)--(\x+14,6+\y);
}

\foreach \x in {20}
\foreach \y in {0} 
{
\draw [fill=gray!20] (\x+0,10+\y)--(\x+10,10+\y)--(\x+10,0+\y)--(\x+0,0+\y)--(\x+0,10+\y);
}

\foreach \x in {10}
\foreach \y in {0} 
{
\draw [fill=white!20] (\x+10,9+\y)--(\x+14,9+\y)--(\x+14,2+\y)--(\x+12,2+\y)--(\x+12,1+\y)--(\x+11,1+\y)--(\x+11,7+\y)--(\x+12,7+\y)--(\x+12,3+\y)--(\x+13,3+\y)--(\x+13,8+\y)--(\x+10,8+\y)--(\x+10,9+\y);
}

\foreach \x in {0,20}
\foreach \y in {0,...,10} 
{ 
\draw  (\x+0,0+\y)--(\x+10,0+\y);
\draw  (\x,0)--(\x,10);
\draw  (\x+10,0)--(\x+10,10);
\draw  (\x+9,0)--(\x+9,10);
\draw  (\x+8,0)--(\x+8,10);
\draw  (\x+7,0)--(\x+7,10);
\draw  (\x+6,0)--(\x+6,10);
\draw  (\x+5,0)--(\x+5,10);
\draw  (\x+4,0)--(\x+4,10);
\draw  (\x+3,0)--(\x+3,10);
\draw  (\x+2,0)--(\x+2,10);
\draw  (\x+1,0)--(\x+1,10);
}

\node at (5,-1) {$B$};  \node at (25,-1) {$b$};

\end{tikzpicture}
\end{center}
\caption{Building blocks $B$ and $b$.}\label{fig_b}
\end{figure}
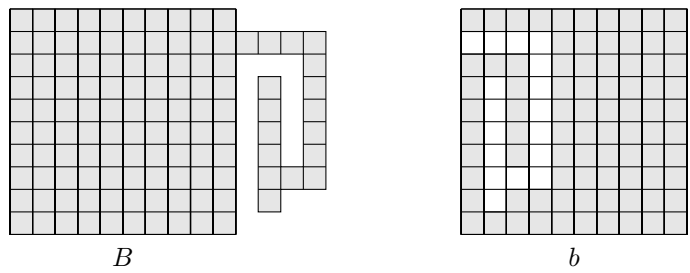

\subsection{The Set of Seven Polyominoes}
 
With the building blocks introduced in the previous subsection, we are now ready to describe the construction of the set of $7$ polyominoes. We use the set of $3$ Wang tiles in Figure \ref{fig_w3} as an example to explain our construction method. The method can be applied to an arbitrary set of Wang tiles in the same way.

The first polyomino that simulates the set of $3$ Wang tiles of Figure \ref{fig_w3} is the \textit{encoder} as illustrated at the bottom of Figure \ref{fig_encoder}. Each square represents a building block, namely a $10\times 10$ functional square with or without bumps/dents. Note that this differs from the figures in the previous subsection where each square represents a unit square. To be distinguished from the figures of the previous subsection, Figure \ref{fig_encoder} is referred to as \textit{level}-$2$ diagram. In Figure \ref{fig_encoder}, the gray squares without labels are normal functional squares. The gray squares with labels ($Y^+$, $Y^-$, $a$ or $B$) are building blocks introduced in the previous subsection. The colored squares (red, green, blue or yellow) are building blocks of either $l$ or $r$. The colored square are not labeled in the complete encoder at the bottom of Figure \ref{fig_encoder}, but labeled in the decomposed encoders on the top three rows as we will describe soon in the next paragraph. So the encoder is a polyomino of size $30\times 3$ (counted by building blocks rather than unit squares).

As we have mentioned in Section \ref{sec_intro}, the set of $3$ Wang tiles of Figure \ref{fig_w3} are simulated in this encoder polyomino in an interlacing manner. To see each of the simulated Wang tiles clearly, we decompose the encoder into three as illustrated on the top three rows of Figure \ref{fig_encoder}, with the Wang tile being simulated on the right of each row for comparison. For each of the decomposed encoders, only the building blocks that are relevant to the simulated Wang tiles are depicted. So each of the Wang tiles is simulated by exactly $8$ building blocks, and the labels of the relevant building blocks together with their locations within the complete encoder are depicted. Because there are $4$ different edge colors in the set of Wang tiles of Figure \ref{fig_w3}, so $2$ binary bits suffice to encoder all the colors. The colors red, green, yellow and blue are encoded by a sequence of two building blocks $ll$, $lr$, $rl$ and $rr$, respectively.


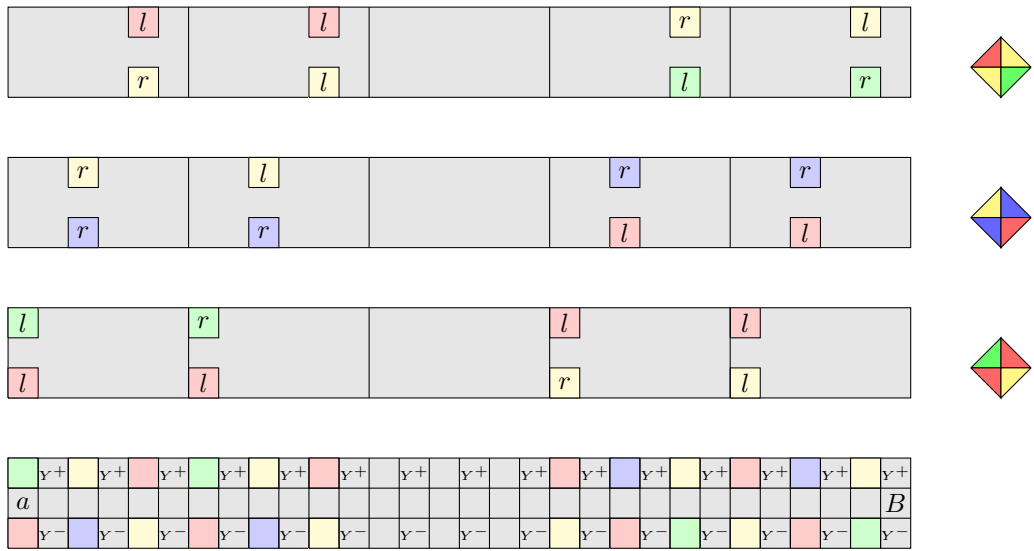
\begin{figure}[H]
\begin{center}
\begin{tikzpicture}[scale=0.4,pattern1/.style={draw=red,pattern color=gray!70, pattern=north east lines}]

\draw [ fill=gray!20] (0,0)--(30,0)--(30,3)--(0,3)--(0,0);
\draw [ fill=gray!20] (0,5)--(30,5)--(30,8)--(0,8)--(0,5);
\draw [ fill=gray!20] (0,10)--(30,10)--(30,13)--(0,13)--(0,10);
\draw [ fill=gray!20] (0,15)--(30,15)--(30,18)--(0,18)--(0,15);

\foreach \x in {0,6}
\foreach \y in {0,5}
{
\draw [ fill=red!20] (\x+0,\y+0)--(\x+0,\y+1)--(\x+1,\y+1)--(\x+1,\y+0)--(\x+0,\y+0);
}
\foreach \x in {18,24}
\foreach \y in {0,5}
{
\draw [ fill=yellow!20] (\x+0,\y+0)--(\x+0,\y+1)--(\x+1,\y+1)--(\x+1,\y+0)--(\x+0,\y+0);
}
\foreach \x in {18,24}
\foreach \y in {2,7}
{
\draw [ fill=red!20] (\x+0,\y+0)--(\x+0,\y+1)--(\x+1,\y+1)--(\x+1,\y+0)--(\x+0,\y+0);
}
\foreach \x in {0,6}
\foreach \y in {2,7}
{
\draw [ fill=green!20] (\x+0,\y+0)--(\x+0,\y+1)--(\x+1,\y+1)--(\x+1,\y+0)--(\x+0,\y+0);
}

\node at (0+0.5,5+0.5) {$l$}; 
\node at (6+0.5,5+0.5) {$l$};

\node at (18+0.5,5+0.5) {$r$}; 
\node at (24+0.5,5+0.5) {$l$};

\node at (0+0.5,7+0.5) {$l$}; 
\node at (6+0.5,7+0.5) {$r$};

\node at (18+0.5,7+0.5) {$l$}; 
\node at (24+0.5,7+0.5) {$l$};

\foreach \x in {2,8}
\foreach \y in {0,10}
{
\draw [ fill=blue!20] (\x+0,\y+0)--(\x+0,\y+1)--(\x+1,\y+1)--(\x+1,\y+0)--(\x+0,\y+0);
}
\foreach \x in {20,26}
\foreach \y in {0,10}
{
\draw [ fill=red!20] (\x+0,\y+0)--(\x+0,\y+1)--(\x+1,\y+1)--(\x+1,\y+0)--(\x+0,\y+0);
}
\foreach \x in {20,26}
\foreach \y in {2,12}
{
\draw [ fill=blue!20] (\x+0,\y+0)--(\x+0,\y+1)--(\x+1,\y+1)--(\x+1,\y+0)--(\x+0,\y+0);
}
\foreach \x in {2,8}
\foreach \y in {2,12}
{
\draw [ fill=yellow!20] (\x+0,\y+0)--(\x+0,\y+1)--(\x+1,\y+1)--(\x+1,\y+0)--(\x+0,\y+0);
}

\node at (0+2.5,8+2.5) {$r$}; 
\node at (6+2.5,8+2.5) {$r$};

\node at (18+2.5,8+2.5) {$l$}; 
\node at (24+2.5,8+2.5) {$l$};

\node at (0+2.5,10+2.5) {$r$}; 
\node at (6+2.5,10+2.5) {$l$};

\node at (18+2.5,10+2.5) {$r$}; 
\node at (24+2.5,10+2.5) {$r$};

\foreach \x in {4,10}
\foreach \y in {0,15}
{
\draw [ fill=yellow!20] (\x+0,\y+0)--(\x+0,\y+1)--(\x+1,\y+1)--(\x+1,\y+0)--(\x+0,\y+0);
}
\foreach \x in {22,28}
\foreach \y in {0,15}
{
\draw [ fill=green!20] (\x+0,\y+0)--(\x+0,\y+1)--(\x+1,\y+1)--(\x+1,\y+0)--(\x+0,\y+0);
}
\foreach \x in {22,28}
\foreach \y in {2,17}
{
\draw [ fill=yellow!20] (\x+0,\y+0)--(\x+0,\y+1)--(\x+1,\y+1)--(\x+1,\y+0)--(\x+0,\y+0);
}
\foreach \x in {4,10}
\foreach \y in {2,17}
{
\draw [ fill=red!20] (\x+0,\y+0)--(\x+0,\y+1)--(\x+1,\y+1)--(\x+1,\y+0)--(\x+0,\y+0);
}
\node at (0+4.5,15+0.5) {$r$}; 
\node at (6+4.5,15+0.5) {$l$};

\node at (18+4.5,15+0.5) {$l$}; 
\node at (24+4.5,15+0.5) {$r$};

\node at (0+4.5,17+0.5) {$l$}; 
\node at (6+4.5,17+0.5) {$l$};

\node at (18+4.5,17+0.5) {$r$}; 
\node at (24+4.5,17+0.5) {$l$};

\foreach \y in {1,2}
{
\draw (0,\y)--(30,\y);
}
\foreach \x in {1,...,29}
{
\draw (\x,0)--(\x,3);
}
\foreach \x in {6,12,18,24}
\foreach \y in {5,10,15}
{
\draw (\x,\y+0)--(\x,\y+3);
}

\foreach \x in {1,3,5,7,9,11, 13,15,17, 19,21,23,25,27,29}
\foreach \y in {0}
{
\node at (\x+0.5,\y+0.5) {\tiny $Y^-$};
}
\foreach \x in {1,3,5,7,9,11, 13,15,17, 19,21,23,25,27,29}
\foreach \y in {2}
{
\node at (\x+0.5,\y+0.5) {\tiny $Y^+$};
}

\foreach \x in {32}
\foreach \y in {5}
{
\draw [ fill=green!60] (\x+0,1+\y)--(\x+1,1+\y)--(\x+1, +2+\y)--(\x+0, +1+\y);
\draw [ fill=red!60] (\x+2,1+\y)--(\x+1,1+\y)--(\x+1, +2+\y)--(\x+2, +1+\y);
\draw [ fill=red!60] (\x+0,1+\y)--(\x+1,1+\y)--(\x+1, +0+\y)--(\x+0, +1+\y);
\draw [ fill=yellow!60] (\x+2,1+\y)--(\x+1,1+\y)--(\x+1, +0+\y)--(\x+2, +1+\y);
}

\foreach \x in {32}
\foreach \y in {10}
{
\draw [ fill=yellow!60] (\x+0,1+\y)--(\x+1,1+\y)--(\x+1, +2+\y)--(\x+0, +1+\y);
\draw [ fill=blue!60] (\x+2,1+\y)--(\x+1,1+\y)--(\x+1, +2+\y)--(\x+2, +1+\y);
\draw [ fill=blue!60] (\x+0,1+\y)--(\x+1,1+\y)--(\x+1, +0+\y)--(\x+0, +1+\y);
\draw [ fill=red!60] (\x+2,1+\y)--(\x+1,1+\y)--(\x+1, +0+\y)--(\x+2, +1+\y);
}

\foreach \x in {32}
\foreach \y in {15}
{
\draw [ fill=red!60] (\x+0,1+\y)--(\x+1,1+\y)--(\x+1, +2+\y)--(\x+0, +1+\y);
\draw [ fill=yellow!60] (\x+2,1+\y)--(\x+1,1+\y)--(\x+1, +2+\y)--(\x+2, +1+\y);
\draw [ fill=yellow!60] (\x+0,1+\y)--(\x+1,1+\y)--(\x+1, +0+\y)--(\x+0, +1+\y);
\draw [ fill=green!60] (\x+2,1+\y)--(\x+1,1+\y)--(\x+1, +0+\y)--(\x+2, +1+\y);
}

\foreach \x in {0}
\foreach \y in {1}
{
\node at (\x+0.5,\y+0.5) {$a$};
}
\foreach \x in {29}
\foreach \y in {1}
{
\node at (\x+0.5,\y+0.5) {$B$};
}

\end{tikzpicture}
\end{center}
\caption{Level-2 diagram of the encoder.}\label{fig_encoder}
\end{figure}

To see it in another way, the encoder consists of $5$ segments as illustrated in the top three rows of Figure \ref{fig_encoder}. Each segment is of size $6\times 3$ (of building blocks). The top sides and bottom sides of the two segments on the left simulate the northwest sides and southwest sides of the Wang tiles, respectively. The two segments on the right simulate the northeast and southeast sides of the Wang tiles. Note that the segment in the middle has no building blocks $l$ or $r$. Therefore, the middle segment does not contribute in encoding the Wang tiles but plays a role in the overall structure of the tiling. So the middle segment is called the \textit{structural segment}, and all the other segments are called \textit{encoding segments}.

Note that another feature of the encoder is that on the north side (south side, resp.), the building blocks appear alternatively between a building block $Y^+$ ($Y^-$, resp.) and building blocks $l$/$r$ or a functional square. In the middle of the west side (east side, resp.) of the encoder is a building block $a$ ($B$, resp.). The building blocks $Y^+$, $Y^-$, $a$ and $B$ of the encoder also serve for structural purposes.

In general, for a set of $n$ Wang tiles with $m$ different colors (let $t=\lceil \log_2 m\rceil$), the corresponding encoder is constructed in the same way. The general encoder is a polyomino of size $2n(2t+1)\times 3$ (of building blocks). In other words, the encoder consists of $2t+1$ segments: one structural segment in the middle, $t$ encoding segments on the left and $t$ encoding segments on the right. Each segment is of size $2n\times 3$ (of building blocks).

The other $6$ polyominoes: two \textit{linkers} ($L$-linker and $R$-linker), two \textit{bigger fillers} ($A$-filler and $B$-filler), a \textit{connector}, and a \textit{tiny filler} ($T$-filler), are illustrated in Figure \ref{fig_poly_6}. By the same convention as Figure \ref{fig_encoder}, a gray square without labels represents a functional square and a gray square with a label represents a special building block of that label introduced in the previous subsection.


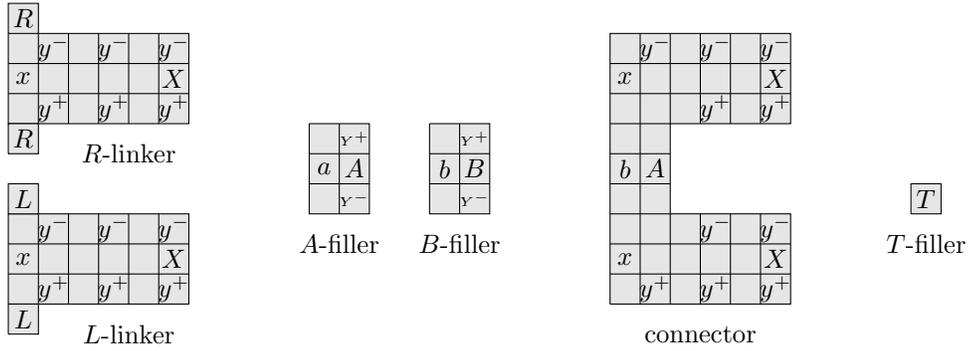
\begin{figure}[H]
\begin{center}
\begin{tikzpicture}[scale=0.4,pattern1/.style={draw=red,pattern color=gray!70, pattern=north east lines}]

\draw [ fill=gray!20] (0,0)--(6,0)--(6,3)--(0,3)--(0,0);
\draw [ fill=gray!20] (0,9)--(6,9)--(6,6)--(0,6)--(0,9);

\draw [ fill=gray!20] (20,0)--(26,0)--(26,3)--(22,3)--(22,6)--(26,6)--(26,9)--(20,9)--(20,0);

\draw [ fill=gray!20] (10,6)--(12,6)--(12,3)--(10,3)--(10,6);
\draw [ fill=gray!20] (14,6)--(16,6)--(16,3)--(14,3)--(14,6);

\foreach \x in {0}
\foreach \y in {-1,3,5,9}
{
\draw [ fill=gray!20] (\x+0,\y+0)--(\x+0,\y+1)--(\x+1,\y+1)--(\x+1,\y+0)--(\x+0,\y+0);
}

\foreach \y in {1,2,7,8}
{
\draw (0,\y)--(6,\y);
}
\foreach \x in {1,...,5}
{
\draw (\x,0)--(\x,3);\draw (\x,6)--(\x,9);
}

\foreach \y in {1,2,7,8}
{
\draw (20,\y)--(26,\y);
}
\foreach \y in {3,4,5,6}
{
\draw (20,\y)--(22,\y);
}
\foreach \x in {21}
{
\draw (\x,0)--(\x,9);
}
\foreach \x in {22,...,25}
{
\draw (\x,0)--(\x,3); 
\draw (\x,6)--(\x,9);
}

\foreach \y in {4,5}
{
\draw (10,\y)--(12,\y);
\draw (14,\y)--(16,\y);
}
\foreach \x in {11,15}
{
\draw (\x,6)--(\x,3);
}

\foreach \x in {30}
\foreach \y in {3}
{
\draw [ fill=gray!20] (\x+0,\y+0)--(\x+0,\y+1)--(\x+1,\y+1)--(\x+1,\y+0)--(\x+0,\y+0);
\node at (\x+0.5,\y+0.5) {$T$};
}

\foreach \x in {1,3,5}
\foreach \y in {6}
{
\node at (\x+0.5,\y+0.5) {$y^+$};
}
\foreach \x in {1,3,5}
\foreach \y in {8}
{
\node at (\x+0.5,\y+0.5) {$y^-$};
}
\foreach \x in {1,3,5,  21,23,25}
\foreach \y in {0}
{
\node at (\x+0.5,\y+0.5) {$y^+$};
}
\foreach \x in {1,3,5,  23,25}
\foreach \y in {2}
{
\node at (\x+0.5,\y+0.5) {$y^-$};
}
\foreach \x in { 23,25}
\foreach \y in {6}
{
\node at (\x+0.5,\y+0.5) {$y^+$};
}
\foreach \x in { 21,23,25}
\foreach \y in {8}
{
\node at (\x+0.5,\y+0.5) {$y^-$};
}

\foreach \x in {0}
\foreach \y in {-1,3}
{
\node at (\x+0.5,\y+0.5) {$L$};
}
\foreach \x in {0}
\foreach \y in {5,9}
{
\node at (\x+0.5,\y+0.5) {$R$};
}

\foreach \x in {0}
\foreach \y in {7}
{
\node at (\x+0.5,\y+0.5) {$x$};
}
\foreach \x in {5}
\foreach \y in {7}
{
\node at (\x+0.5,\y+0.5) {$X$};
}
\foreach \x in {0,20}
\foreach \y in {1}
{
\node at (\x+0.5,\y+0.5) {$x$};
}
\foreach \x in {5,25}
\foreach \y in {1}
{
\node at (\x+0.5,\y+0.5) {$X$};
}
\foreach \x in {20}
\foreach \y in {7}
{
\node at (\x+0.5,\y+0.5) {$x$};
}
\foreach \x in {25}
\foreach \y in {7}
{
\node at (\x+0.5,\y+0.5) {$X$};
}

\foreach \x in {11,15}
\foreach \y in {3}
{
\node at (\x+0.5,\y+0.5) {{\tiny $Y^-$}};
}
\foreach \x in {11,15}
\foreach \y in {5}
{
\node at (\x+0.5,\y+0.5) {{\tiny $Y^+$}};
}

\foreach \x in {10}
\foreach \y in {4}
{
\node at (\x+0.5,\y+0.5) {$a$};
}
\foreach \x in {11}
\foreach \y in {4}
{
\node at (\x+0.5,\y+0.5) {$A$};
}
\foreach \x in {14}
\foreach \y in {4}
{
\node at (\x+0.5,\y+0.5) {$b$};
}
\foreach \x in {15}
\foreach \y in {4}
{
\node at (\x+0.5,\y+0.5) {$B$};
}

\foreach \x in {20}
\foreach \y in {4}
{
\node at (\x+0.5,\y+0.5) {$b$};
}
\foreach \x in {21}
\foreach \y in {4}
{
\node at (\x+0.5,\y+0.5) {$A$};
}

\node at (4,-1) {$L$-linker};
\node at (4,5) {$R$-linker};

\node at (11,2) {$A$-filler};
\node at (15,2) {$B$-filler};

\node at (23,-1) {connector};

\node at (30.5,2) {$T$-filler};

\end{tikzpicture}
\end{center}
\caption{Level-2 diagram of two linkers, two bigger filler, one connector, one tiny filler.}\label{fig_poly_6}
\end{figure}

Both linkers, the $L$-\textit{linker} and $R$-\textit{linker}, consist of a main body of size $6\times 3$ (of building blocks), and two building blocks attached to the north and south sides of the main body, respectively. The main bodies of the two linkers are the same, with building blocks $y^-$, $y^+$, $x$ and $X$ on the boundary. The only difference between $L$-linker and $R$-linker is the two building blocks attached to the main bodies. Two building blocks $R$ ($L$, resp.) are attached to the main body of $R$-linker ($L$-linker, resp.). The linkers are used to link the building blocks $l$ or $r$ of one encoder to that of another encoder, as the two building blocks $l$ and $r$ only appear in encoders among the set of $7$ polyominoes. Evidently, a building block $l$ ($r$, resp.) can only be linked to another building block $l$ ($r$, resp.).

The two bigger fillers, the $A$-\textit{filler} and $B$-\textit{filler} are of size $2\times 3$ (of building blocks). Both fillers have a building block $Y^+$ on the north and a building block $Y^-$ on the south. The $A$-linkers has building blocks $a$ and $A$, while the $B$-linkers has building blocks $b$ and $B$. They are used to fill the horizontal gaps (if any) between the encoders and connectors.

As we have seen previously, the encoders are linked by linkers. The linkers are in turn connected by the \textit{connectors} to form an infinite rigid structure as we will see later. Intuitively speaking, as we can see in the central right of Figure~\ref{fig_poly_6}, the connector is the union of two degenerated linkers (i.e., the main bodies of the linkers without the building blocks $L$ or $R$) and a mixed bigger filler (i.e., a mixture of $A$-filler and $B$-filler as it has a building block $b$ on the west side and a building block $A$ on the east side). When gluing together to form the connector, the two degenerated linkers and the mixed filler are aligned to the left.

The last one of our set of $7$ polyominoes, the tiny $T$-\textit{filler}, is just a building block $L$ (or $R$) as mentioned in the previous subsection.

Note that the construction of the two bigger filler and the tiny filler are the same for any set of Wang tiles. Like the encoder, the size of the two linkers and the connector change according to the set of Wang tiles. In general, for a set of $n$ Wang tiles with $m$ different colors (let $t=\lceil \log_2 m\rceil$), the main body of the linkers are of size $2n\times 3$ (of building blocks), which is the same as the size of a segment (encoding segments or structural segment) of the encoder. The size of the connector is determined by the linker and the bigger filler.

It is easy to check all the $7$ polyominoes constructed in this subsection are connected.

\subsection{Tiling Pattern and Proof of Theorem \ref{thm_main}}

\begin{proof}[Proof of Theorem \ref{thm_main}] Given a set of Wang tiles, we have constructed a set of $7$ polyominoes by the method in the previous subsection. To complete the proof, it suffices to show that the set of $7$ polyominoes can tile the plane if and only if the corresponding set of Wang tiles can tile the plane.

\begin{itemize}
\item (\textbf{The connector must be used.}) If the encoder is used, then the connector must be used. Because we can only put a connector to the south or north of the structural segment of an encoder in any tiling of the plane. If the $T$-filler is used, then the encoder must be used. Because the $T$-fillers cannot tile the plane by themselves, and the only building blocks $l$ and $r$ that can match the $T$-filler appear exclusively in the encoder. By the same reason, if the $L$-linker or $R$-linker is used, then the encoder must be used, as there are building blocks $L$ or $R$ (which are in fact identical to the $T$-filler) in the two linkers. 

If the $A$-filler is used, then the only other polyomino that can match the building block $A$ in the $A$-filler is the building block $a$. Note that the build block $a$ only appears in the encoder or the $A$-filler. If the encoder is used to match the building block $A$, then connector must in turn be used. If the encoder is not used, then we must form a horizontal array of $A$-filler, and the only tile that can be placed next to the north or south side of this array is the connector. By similar argument, if the $B$-filler is used, then the connector must be used.

In summary, we have shown that in all cases, the connector must be used to tile the entire plane.

\item (\textbf{Five segments away to the east or west of a connector, we must put another connector.}) In general, for the polyominoes constructed from a set of Wang tiles with $m$ colors (let $t=\lceil \log_2 m\rceil$), we must put another connector on the same horizontal row exactly $2t+1$ segments away to the east or west of any connector (see Figure \ref{fig_2conn}). Therefore, the distance between two horizontally neighboring connector is exactly the length of an encoder.

\begin{figure}[H]
\begin{center}
\begin{tikzpicture}[scale=0.15,pattern1/.style={draw=red,pattern color=gray!70, pattern=north east lines}]

\foreach \x in {12,48}
\foreach \y in {3}
{
\draw  [ fill=violet!20] (\x,\y)--(\x+6,\y)--(\x+6,\y+3)--(\x+2,\y+3)--(\x+2,\y+6)--(\x+6,\y+6)--(\x+6,\y+9)--(\x,\y+9)--(\x,\y);
}

\draw [<->,thick] (18,1)--(48,1);  
\node at (33,3) {$2t+1$ segments};
\draw (18,0)--(18,2); 
\draw (48,0)--(48,2);

\end{tikzpicture}
\end{center}
\caption{The distance between two neighboring connectors.}\label{fig_2conn}
\end{figure}
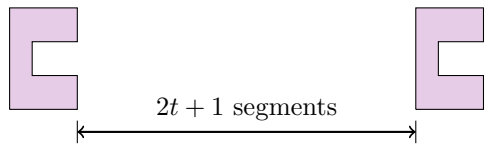


\begin{figure}[H]
\begin{center}
\begin{tikzpicture}[scale=0.15,pattern1/.style={draw=red,pattern color=gray!70, pattern=north east lines}]

\foreach \x in {12}
\foreach \y in {3}
{
\draw  [ fill=violet!20] (\x,\y)--(\x+6,\y)--(\x+6,\y+3)--(\x+2,\y+3)--(\x+2,\y+6)--(\x+6,\y+6)--(\x+6,\y+9)--(\x,\y+9)--(\x,\y);
}
\foreach \x in {60,66}
\foreach \y in {-3}
{
\draw  [ fill=violet!20] (\x,\y)--(\x+6,\y)--(\x+6,\y+3)--(\x+2,\y+3)--(\x+2,\y+6)--(\x+6,\y+6)--(\x+6,\y+9)--(\x,\y+9)--(\x,\y);
}
\filldraw[red] (64,1.5) circle (8pt);
\foreach \x in {14,16,18,20,22,24,26}
\foreach \y in {6}
{
\draw  [ fill=blue!20]  (\x,\y)--(\x+2,\y)--(\x+2,\y+3)--(\x,\y+3)--(\x,\y);
}
\foreach \x in {28}
\foreach \y in {6}
{
\draw  [color=black, fill=gray!20] (\x,\y)--(\x+30,\y)--(\x+30,\y+3)--(\x,\y+3)--(\x,\y);
}

\foreach \x in {58,60,62,64,66,68,70,72,74,76,78}
\foreach \y in {6}
{
\draw  [ fill=green!20]  (\x,\y)--(\x+2,\y)--(\x+2,\y+3)--(\x,\y+3)--(\x,\y);
}
\node at (82,7.5) {$\cdots$};

\end{tikzpicture}
\end{center}
\caption{No connectors to the east of a connector.}\label{fig_1conn}
\end{figure}
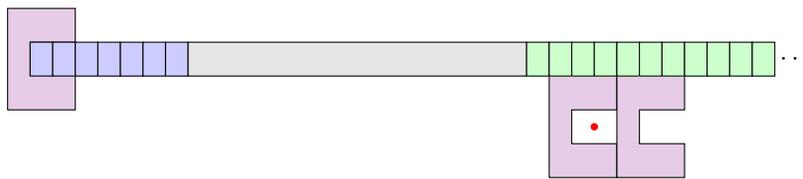

Suppose to the contrast that there is no connector exactly $2t+1$ segment away to the east of a connector, then there are two possibilities. (\textbf{Case I.}) If there are no connectors at all to the east of a connector, then there are only two methods to place tiles to the east of the building block $A$ of the connector. The first method is to put a one-way infinite array of $A$-fillers. The second method is to put in order a finite number of $A$-fillers, followed by a single encoder, and followed by a one-way infinite array $B$-fillers (see Figure \ref{fig_1conn}). Either method yields a contradiction immediately. For the second method, we can only place connectors to the south of the infinite array of $B$-fillers. But then there is no way to fill the holes between these connectors anymore (see the red solid dot of Figure \ref{fig_1conn}). For the first method, the tiling cannot continue further for the same reason.

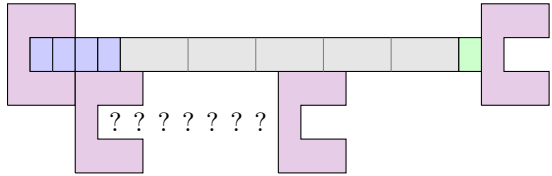
\begin{figure}[H]
\begin{center}
\begin{tikzpicture}[scale=0.15,pattern1/.style={draw=red,pattern color=gray!70, pattern=north east lines}]

\foreach \x in {12,54}
\foreach \y in {3}
{
\draw  [ fill=violet!20] (\x,\y)--(\x+6,\y)--(\x+6,\y+3)--(\x+2,\y+3)--(\x+2,\y+6)--(\x+6,\y+6)--(\x+6,\y+9)--(\x,\y+9)--(\x,\y);
}

\foreach \x in {18,36}
\foreach \y in {-3}
{
\draw  [ fill=violet!20] (\x,\y)--(\x+6,\y)--(\x+6,\y+3)--(\x+2,\y+3)--(\x+2,\y+6)--(\x+6,\y+6)--(\x+6,\y+9)--(\x,\y+9)--(\x,\y);
}
\node at (28,1.5) {? ? ? ? ? ? ?};
\foreach \x in {14,16,18,20}
\foreach \y in {6}
{
\draw  [ fill=blue!20]  (\x,\y)--(\x+2,\y)--(\x+2,\y+3)--(\x,\y+3)--(\x,\y);
}
\foreach \x in {22}
\foreach \y in {6}
{
\draw  [color=black, fill=gray!20] (\x,\y)--(\x+30,\y)--(\x+30,\y+3)--(\x,\y+3)--(\x,\y);
}
\foreach \x in {28,34,40,46}
{
\draw  [color=black!50] (\x,6)--(\x,9);
}

\foreach \x in {52}
\foreach \y in {6}
{
\draw  [ fill=green!20]  (\x,\y)--(\x+2,\y)--(\x+2,\y+3)--(\x,\y+3)--(\x,\y);
}

\end{tikzpicture}
\end{center}
\caption{Two connectors too far apart.}\label{fig_2conn_far}
\end{figure}

(\textbf{Case II.}) Now suppose there is indeed another connector to the east of a connector, but the distance between them is not exactly $2t+1$ segments. Note that to tile the plane, the distance of two horizontally neighboring connectors must be a multiple of segments. Otherwise, there is no way to fill the gap between the building block $X$ of the first connector and the building block $x$ of the second connector (because only linkers or connectors can be used to fill this gap). So we further divide into two subcases. If the two connectors are too close and the distance between them is $2t$ or less segments, then we get an immediate contradiction by the same red-solid-dot argument as in Figure \ref{fig_1conn} (or the question-mark-argument in Figure \ref{fig_2conn_far}). If the two connectors are too far apart and the distance between them is $2t+2$ or more segments, then the gap between the building block $A$ of the first connector and the building block $b$ of the second can only be filled in order (from left to right) a finite number (possibly zero) of $A$-fillers, followed by an encoder, and then followed by a finite number (possibly zero) of $B$-fillers (see Figure \ref{fig_2conn_far}). As the two connectors are too far apart, at least $n$ $A$-fillers and $B$-fillers in total ($n=3$ in this example) are exposed outside. So we must place a connector somewhere to the south of an exposed bigger filler (either $A$-filler or $B$-filler). We also have to place another connector to the south of the structural segment of the encoder. Hence we have a new pair of horizontally neighboring connectors that are too close to each other, and there is no way to fill the gap labeled by the question marks in Figure \ref{fig_2conn_far}.

So we have completed the proof of the fact that two horizontally neighboring connectors must be exactly $2t+1$ segments apart.

\item (\textbf{Three different configurations between two horizontally neighboring connectors.}) In general there are $n$ (where $n$ is the number of Wang tiles in a set) different configurations between two horizontally neighboring connectors. Figure \ref{fig_3config} illustrates all the three configurations for the example $n=3$. The different positions of the encoder simulate the flexibility of choosing one of the three Wang tiles.

No matter which configuration, we must place another two connectors to the north and south sides adjacent to the structural segment of the encoder (see the dark violet connectors in Figure \ref{fig_3config}). The newly added connectors must be placed exactly in the middle between the two initial connectors regardless of the configuration of the encoder. In other words, the distances between the newly added connectors (in dark violet) and the initial connector (in light violet) are $t$ segments ($t=2$ in this example). This is because the gaps between the building block $x$ of the dark connector and the building block $X$ of the light connector (or the gaps between the building block $X$ of the dark connector and the building block $x$ of the light connector) can only be filled with the linkers, which is always a multiple of segments.


\begin{figure}[H]
\begin{center}
\begin{tikzpicture}[scale=0.15,pattern1/.style={draw=red,pattern color=gray!70, pattern=north east lines}]

\foreach \x in {30}
\foreach \y in {-3, 9,21,33}
{
\draw  [ fill=violet!40] (\x,\y)--(\x+6,\y)--(\x+6,\y+3)--(\x+2,\y+3)--(\x+2,\y+6)--(\x+6,\y+6)--(\x+6,\y+9)--(\x,\y+9)--(\x,\y);
}

\foreach \x in {12,48}
\foreach \y in {3}
{
\draw  [ fill=violet!20] (\x,\y)--(\x+6,\y)--(\x+6,\y+3)--(\x+2,\y+3)--(\x+2,\y+6)--(\x+6,\y+6)--(\x+6,\y+9)--(\x,\y+9)--(\x,\y);
}

\foreach \x in {14}
\foreach \y in {6}
{
\draw  [color=black, fill=gray!20] (\x,\y)--(\x+30,\y)--(\x+30,\y+3)--(\x,\y+3)--(\x,\y);
}
\foreach \x in {20,26,32,38}
\foreach \y in {0}
{
\draw  [color=black!50] (\x,\y+6)--(\x,\y+9);
}

\foreach \x in {44,46}
\foreach \y in {6}
{
\draw  [ fill=green!20]  (\x,\y)--(\x+2,\y)--(\x+2,\y+3)--(\x,\y+3)--(\x,\y);
}

\foreach \x in {12,48}
\foreach \y in {15}
{
\draw  [ fill=violet!20] (\x,\y)--(\x+6,\y)--(\x+6,\y+3)--(\x+2,\y+3)--(\x+2,\y+6)--(\x+6,\y+6)--(\x+6,\y+9)--(\x,\y+9)--(\x,\y);
}

\foreach \x in {14}
\foreach \y in {18}
{
\draw  [ fill=blue!20]  (\x,\y)--(\x+2,\y)--(\x+2,\y+3)--(\x,\y+3)--(\x,\y);
}
\foreach \x in {16}
\foreach \y in {18}
{
\draw  [color=black, fill=gray!20] (\x,\y)--(\x+30,\y)--(\x+30,\y+3)--(\x,\y+3)--(\x,\y);
}
\foreach \x in {22,28,34,40}
\foreach \y in {12}
{
\draw  [color=black!50] (\x,\y+6)--(\x,\y+9);
}

\foreach \x in {46}
\foreach \y in {18}
{
\draw  [ fill=green!20]  (\x,\y)--(\x+2,\y)--(\x+2,\y+3)--(\x,\y+3)--(\x,\y);
}

\foreach \x in {12,48}
\foreach \y in {27}
{
\draw  [ fill=violet!20] (\x,\y)--(\x+6,\y)--(\x+6,\y+3)--(\x+2,\y+3)--(\x+2,\y+6)--(\x+6,\y+6)--(\x+6,\y+9)--(\x,\y+9)--(\x,\y);
}

\foreach \x in {14,16}
\foreach \y in {30}
{
\draw  [ fill=blue!20]  (\x,\y)--(\x+2,\y)--(\x+2,\y+3)--(\x,\y+3)--(\x,\y);
}
\foreach \x in {18}
\foreach \y in {30}
{
\draw  [color=black, fill=gray!20] (\x,\y)--(\x+30,\y)--(\x+30,\y+3)--(\x,\y+3)--(\x,\y);
}
\foreach \x in {24,30,36,42}
\foreach \y in {24}
{
\draw  [color=black!50] (\x,\y+6)--(\x,\y+9);
}

\draw [<->,thick] (18,1)--(30,1);  
\node at (24,3) {$t$ segments};
\draw (18,0)--(18,2); 
\draw (30,0)--(30,2);

\draw [<->,thick] (36,1)--(48,1);  
\node at (42,3) {$t$ segments};
\draw (36,0)--(36,2); 
\draw (48,0)--(48,2);

\end{tikzpicture}
\end{center}
\caption{Three different configurations between two connectors.}\label{fig_3config}
\end{figure}
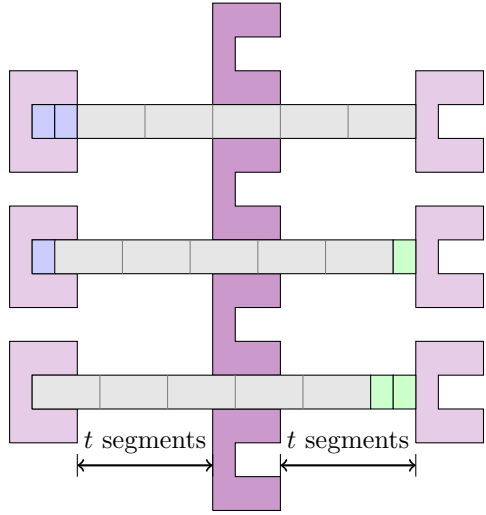

\item (\textbf{Forming the rigid tiling pattern.}) Now we can repeatedly apply the above properties about the positions of the connectors to get the overall tiling pattern as illustrated in Figure \ref{fig_pattern}.


\begin{figure}[H]
\begin{center}
\begin{tikzpicture}[scale=0.15,pattern1/.style={draw=red,pattern color=gray!70, pattern=north east lines}]


\foreach \x in {0,6,12,18,24}
\foreach \y in {0}
{
\draw  [color=black!50, fill=gray!20] (\x,\y)--(\x+6,\y)--(\x+6,\y+3)--(\x,\y+3)--(\x,\y);
}

\foreach \x in {32,38,44,50,56}
\foreach \y in {0}
{
\draw  [color=black!50, fill=gray!20] (\x,\y)--(\x+6,\y)--(\x+6,\y+3)--(\x,\y+3)--(\x,\y);
}

\foreach \x in {-18,-12,-6,0,6,16,22,28,34,40,  52,58,64,70,76}
\foreach \y in {6}
{
\draw  [color=black!50, fill=gray!20] (\x,\y)--(\x+6,\y)--(\x+6,\y+3)--(\x,\y+3)--(\x,\y);
}

\foreach \x in {-2,4,10,16,22}
\foreach \y in {12}
{
\draw  [color=black!50, fill=gray!20] (\x,\y)--(\x+6,\y)--(\x+6,\y+3)--(\x,\y+3)--(\x,\y);
}
\foreach \x in {36,42,48,54,60}
\foreach \y in {12}
{
\draw  [color=black!50, fill=gray!20] (\x,\y)--(\x+6,\y)--(\x+6,\y+3)--(\x,\y+3)--(\x,\y);
}
\foreach \x in {0,6,18,24,36,42,54,60}
\foreach \y in {-3,3,9,15}
{
\draw  [ fill=orange!20] (\x,\y-1)--(\x+1,\y-1)--(\x+1,\y)--(\x+6,\y)--(\x+6,\y+3)--(\x+1,\y+3)--(\x+1,\y+4)--(\x,\y+4)--(\x,\y-1);
}

\foreach \x in {-18,-12,72,78}
\foreach \y in {3,9}
{
\draw  [ fill=orange!20] (\x,\y-1)--(\x+1,\y-1)--(\x+1,\y)--(\x+6,\y)--(\x+6,\y+3)--(\x+1,\y+3)--(\x+1,\y+4)--(\x,\y+4)--(\x,\y-1);
}
\foreach \x in {-6,30,66}
\foreach \y in {-3,9}
{
\draw  [ fill=violet!20] (\x,\y)--(\x+6,\y)--(\x+6,\y+3)--(\x+2,\y+3)--(\x+2,\y+6)--(\x+6,\y+6)--(\x+6,\y+9)--(\x,\y+9)--(\x,\y);
}
\foreach \x in {-24,12,48,84}
\foreach \y in {3}
{
\draw  [ fill=violet!20] (\x,\y)--(\x+6,\y)--(\x+6,\y+3)--(\x+2,\y+3)--(\x+2,\y+6)--(\x+6,\y+6)--(\x+6,\y+9)--(\x,\y+9)--(\x,\y);
}
\foreach \x in {12,48}
\foreach \y in {-9,15}
{
\draw  [ fill=violet!20] (\x,\y)--(\x+6,\y)--(\x+6,\y+3)--(\x+2,\y+3)--(\x+2,\y+6)--(\x+6,\y+6)--(\x+6,\y+9)--(\x,\y+9)--(\x,\y);
}

\foreach \x in {-2,-4}
\foreach \y in {0}
{
\draw  [ fill=blue!20]  (\x,\y)--(\x+2,\y)--(\x+2,\y+3)--(\x,\y+3)--(\x,\y);
}
\foreach \x in {-4,32,34}
\foreach \y in {12}
{
\draw  [ fill=blue!20]  (\x,\y)--(\x+2,\y)--(\x+2,\y+3)--(\x,\y+3)--(\x,\y);
}
\foreach \x in {-20,-22,14,50}
\foreach \y in {6}
{
\draw  [ fill=blue!20]  (\x,\y)--(\x+2,\y)--(\x+2,\y+3)--(\x,\y+3)--(\x,\y);
}
\foreach \x in {62,64}
\foreach \y in {0}
{
\draw  [ fill=green!20]  (\x,\y)--(\x+2,\y)--(\x+2,\y+3)--(\x,\y+3)--(\x,\y);
}
\foreach \x in {46,82}
\foreach \y in {6}
{
\draw  [ fill=green!20]  (\x,\y)--(\x+2,\y)--(\x+2,\y+3)--(\x,\y+3)--(\x,\y);
}
\foreach \x in {28}
\foreach \y in {12}
{
\draw  [ fill=green!20]  (\x,\y)--(\x+2,\y)--(\x+2,\y+3)--(\x,\y+3)--(\x,\y);
}

\draw [thick] (80,-2)--(82,0)--(84,-2)--(82,-4)--(80,-2);
\draw [thick] (84,-2)--(86,0)--(88,-2)--(86,-4)--(84,-2);
\draw [thick] (80,-6)--(82,-8)--(84,-6)--(82,-4)--(80,-6);
\draw [thick] (84,-6)--(86,-8)--(88,-6)--(86,-4)--(84,-6);

\draw [thick] (80,-2)--(78,-4)--(80,-6); \draw [thick] (88,-2)--(90,-4)--(88,-6);

\draw [color=black!20] (80,-2)--(88,-2);
\draw [color=black!20] (78,-4)--(90,-4);
\draw [color=black!20] (80,-6)--(88,-6);

\draw [color=black!20] (82,0)--(82,-8);
\draw [color=black!20] (86,0)--(86,-8);

\draw [color=black!20] (80,-2)--(80,-6);
\draw [color=black!20] (84,-2)--(84,-6);
\draw [color=black!20] (88,-2)--(88,-6);

\node at (15,10.5) {\bf 1};
\node at (51,10.5) {\bf 1};

\node at (-21,10.5) {\bf 2};
\node at (87,10.5) {\bf 2};

\node at (33,16.5) {\bf 3};
\node at (33,4.5) {\bf 3};

\node at (69,16.5) {\bf 4};
\node at (69,4.5) {\bf 4};
\node at (-3,16.5) {\bf 4};
\node at (-3,4.5) {\bf 4};

\node at (15,22.5) {\bf 5};
\node at (51,22.5) {\bf 5};
\node at (15,-1.5) {\bf 5};
\node at (51,-1.5) {\bf 5};

\end{tikzpicture}
\end{center}
\caption{Tiling Pattern.}\label{fig_pattern}
\end{figure}
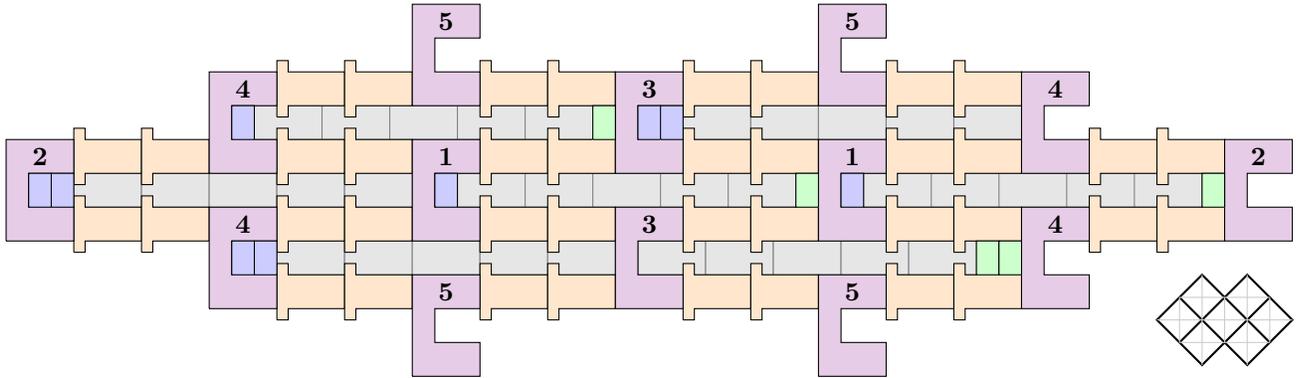

We begin with a pair of horizontally neighboring connectors that are $2t+1$ segments apart (labeled $1$ in Figure~\ref{fig_pattern}). By the distance condition, on the same horizontal row, there must be another two connectors $2t+1$ segment away to the left and the right (labeled $2$). On the horizontal row above (and below) the first row, we must place a connector (labeled $3$) exactly in the middle between the first two connectors (labeled $1$). By repeatedly applying the positional properties of the connectors, we continue to place the connectors labeled $4$ and $5$ and so on. This process of placing the connectors continues indefinitely to the entire plane.  Therefore, without loss of generality, if we place one of the connectors at the origin of the plane, then the set of connectors in any tiling forms a rigid lattice structure.

\item (\textbf{Filling the gaps.}) By the previous steps, we have a rigid lattice of connectors. Between any two horizontally neighboring connectors at the same row, there is a flexibility of placing an encoder from one of the $n$ different configurations, which simulates one of the $n$ Wang tiles. To tile the entire plane, it depends on whether the remaining gaps can be filled completely. The majority of the gaps are between the encoders of different rows. Because of the building blocks $Y^+$ and $Y^-$ of the encoders, these gaps can only be filled by the linkers. As mentioned before, the linkers always link a building block $l$ of one encoder with another $l$ of another encoder, or link a building block $r$ with another $r$. Therefore, the gaps between different rows of encoders can be filled by linkers if and only if the simulated Wang tiles are matched in colors for any pair of adjacent simulated edges of two encoders. Note that there are still some tiny gaps (the gaps in the building blocks $l$ and $r$ of the encoders that are not matched by the linkers) after placing the linkers, and those tiny gaps can always be filled by the tiny $T$-fillers. 
\end{itemize}

By the above arguments, the set of $7$ polyominoes can tile the plane if and only if the set of Wang tiles that are simulated by the $7$ polyominoes can tile the plane. We have reduced the undecidable Wang's domino problem \cite{b66} to the problem of tiling the plane with a set of $7$ polyominoes. This completes the proof.
\end{proof}

Because there exist aperiodic sets of Wang tiles (there are quite a few of them, see \cite{b66,jr21,kari96,r71}), so for each aperiodic set $W$ of Wang tiles, by the proof of Theorem \ref{thm_main}, the set $P$ of $7$ polyominoes constructed to simulate $W$ is also aperiodic. So Corollary \ref{cor_main} is a direct consequence of Theorem \ref{thm_main}.

\noindent \textbf{Remark 1.} As mentioned in Section \ref{sec_intro}, the most crucial novel technique in the proof of Theorem \ref{thm_main} is the interlacing method for simulating a set of Wang tiles in a single polyomino. However, to establish a complete system (i.e., the set of $7$ polyominoes) that simulates a set of Wang tiles perfectly, several features are also indispensable. For example, the length of the linkers and the connectors plays an important role in enforcing the rigid structure of the connectors.

\section{Conclusion}\label{sec_conclu}

We prove the undecidability of translational tiling of the plane with a set of $7$ polyominoes in this paper. This extends our knowledge of the undecidability of the more general problem of translational tiling of $\mathbb{Z}^n$ with a set of $k$ tiles (both $n$ and $k$ are fixed).

\begin{Problem}[Translational tiling of $\mathbb{Z}^n$ with a set of $k$ tiles] \label{pro_gen}
A tile is a finite subset of $\mathbb{Z}^n$. Let $k$ and $n$ be fixed positive integers. Given a set $S$ of $k$ tiles in $\mathbb{Z}^n$, is there an algorithm to decide whether $\mathbb{Z}^n$ can be tiled by translated copies of tiles in $S$?
\end{Problem}

With our results in this paper, the current knowledge about the decidability and undecidability of Problem \ref{pro_gen} can be summarized in Figure \ref{fig_nk}. The green region is known to be decidable \cite{bn91,b20, gt21, s93, w15}, the red region is known to be undecidable \cite{o09,yz24,yz24c,yz24d}, and the yellow region is possibly undecidable \cite{gt24b}. It remains open to find a fixed dimension $n$ that translational tiling of $\mathbb{Z}^n$ with one tile or two tiles is undecidable. The boundary of the yellow region in Figure \ref{fig_nk} is to demonstrate the idea that as the dimension~$n$ increases, it may need fewer tiles to get undecidable results for translational tiling of $\mathbb{Z}^n$. For the plane, the decidability or undecidability of translational tiling of $\mathbb{Z}^2$ with a set of $k$ ($2\leq k\leq 6$) polyominoes is still open.

As a consequence of our undecidability results, we also prove the existence of aperiodic sets of $7$ polyominoes for translational tiling of the plane. This breaks the previous 30-year-old record of Ammann on the minimum size of aperiodic sets for translational tiling of the plane. Our aperiodic sets of $7$ polyominoes are constructed from simulating aperiodic sets of Wang tiles \cite{b66,jr21,kari96,r71}, so they are much more complicated than Ammann's set of $8$ tiles \cite{ags92}. A natural problem for future study is to find simpler aperiodic sets of size $7$ or less for the translational tiling of the plane.


\begin{figure}[H]
\begin{center}
\begin{tikzpicture}[scale=1]

\draw [fill=green!20,dashed] (11.9,0)--(0,0)--(0,2)--(1,2)--(1,1)--(11.9,1);

\begin{scope}
    \clip (0,9) rectangle (2,10.9);
\filldraw [yellow!50] (0,11)--(0,8)--(3,8)--(3,11); 
\end{scope}

\begin{scope}
    \clip (2,1) rectangle (11.9,10.9);
\draw [fill=red!20,dashed] (12,1)--(6,1)--(6,2)--(3,2)--(3,3)--(2,3)--(2,4)--(2,12)--(12,12); 
\end{scope}

\begin{scope}
    \clip (0.64,3.9) rectangle (2,10.9);

\filldraw [yellow!50,dashed] plot [smooth] coordinates {(-1,11) (1,8.5) (1.5,6.5) (2,4)  (2.5,1) (9,0) (12,1) (12,15) (13,12)};
\end{scope}

\draw[help lines, color=gray, dashed]  (-0.1,-0.1) grid (11.9,10.9);
\draw[->,ultra thick] (-0.5,0)--(12,0) node[right]{$k$};
\draw[->,ultra thick] (0,-0.5)--(0,11) node[above]{$n$};

\foreach \y in {1,2,3,4,5}
{
\node at (-0.5,\y-0.5) {\y};
}
\node at (-0.9,9.7) {some};
\node at (-0.8,9.3) {large $n$?};

\foreach \x in {1,...,11}
{
\node at (\x-0.5,-0.5) {\x};
}

\end{tikzpicture}
\end{center}
\caption{Translational tiling problem of $\mathbb{Z}^n$ with a set of $k$ tiles.}\label{fig_nk}
\end{figure}
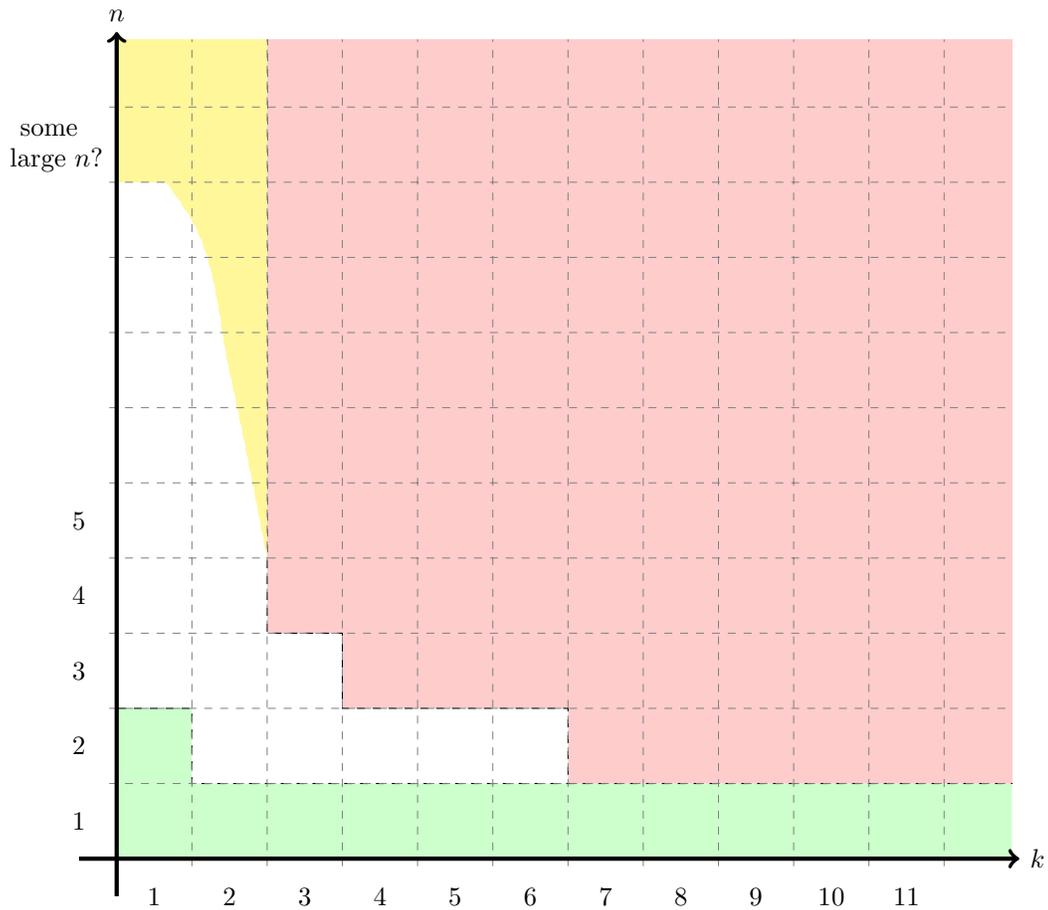

\section*{Acknowledgements}
The first author was supported by the Research Fund of Guangdong University of Foreign Studies (Nos. 297-ZW200011 and 297-ZW230018), and the National Natural Science Foundation of China (No. 61976104).




\begin{thebibliography}{99}

\bibitem{ags92}
R. Ammann, B. Gr\"unbaum, G. C. Shephard, Aperiodic tiles, \textit{Discrete \& Computational Geometry}, \textbf{8}(1992), 1-25.

\bibitem{bg13}
M. Baake, U. Grimm, Aperiodic Order, Volume 1: A Mathematical Invitation, Cambridge University Press, 2013.

\bibitem{bn91}
D. Beauquier, M. Nivat, On translating one polyomino to tile the plane, \textit{Discrete \& Computational Geometry}, \textbf{6}(1991), 575-592.

\bibitem{b66} R. Berger, The undecidability of the domino problem, \textit{Memoirs of the American Mathematical Society}, \textbf{66}(1966), 1-72.

\bibitem{b20}
B. Bhattacharya, Periodicity and decidability of tilings of $\mathbb{Z}^2$. \textit{American Journal of Mathematics}, \textbf{142}(2020), 255-266.



\bibitem{dl24}
E. D. Demaine, S. Langerman, Tiling with three polygons is undecidable, arXiv:2409.11582 [cs.CG]






\bibitem{gt21}
R. Greenfeld, T. Tao. The structure of translational tilings in $\mathbb{Z}^d$. Discrete Analysis. (2021:16). 1-28.

\bibitem{gt23}
R. Greenfeld, T. Tao, Undecidable translational tilings with only two tiles, or one nonabelian tile. \textit{Discrete \& Computational Geometry}, \textbf{70}(2023), 1652–1706.

\bibitem{gt24a}
R. Greenfeld, T. Tao, A counterexample to the periodic tiling conjecture. \textit{Annals of Mathematics}, \textbf{200}(1)(2024), 301-363.

\bibitem{gt24b}
R. Greenfeld, T. Tao, Undecidability of translational monotilings. to appear in \textit{Journal of the European Mathematical Society}, arXiv:2309.09504 [math.CO]


\bibitem{gs16}
B. Gr\"unbaum, G. C. Shephard, Tilings and Patterns, 2nd Edition, Dover Publications, 2016.

\bibitem{jr21} E. Jeandel and M. Rao, An aperiodic set of 11 Wang tiles, \textit{Advances in Combinatorics}, (2021:1), 1-37.


\bibitem{jr12}
E. Jeandel, N. Rolin, Fixed parameter undecidability for Wang tilesets, In: E. Formenti (eds), AUTOMATA and JAC 2012 conferences, EPTCS 90, (2012), 69–85.


\bibitem{kari96} J. Kari, A small aperiodic set of wang tiles, \textit{Discrete Mathematics}, \textbf{160}(1996) 259-264.



\bibitem{lw96} J. C. Lagarias, Y. Wang, Tiling the line with translates of one tile, \textit{Inventiones mathematicae}, \textbf{124} (1996), 341-365. 

\bibitem{o09}
N. Ollinger, Tiling the plane with a fixed number of polyominoes, In: A.H. Dediu, A.M. Ionescu, C. Martín-Vide (eds), Language and Automata Theory and Applications (LATA 2009). Lecture Notes in Computer Science, vol 5457. Springer, Berlin, Heidelberg, 638-649.

\bibitem{p79}
R. Penrose, Pentaplexity a class of non-periodic tilings of the plane, \textit{The Mathematical Intelligencer}, \textbf{2}(1979), 32–37.

\bibitem{r71} R. Robinson, Undecidability and nonperiodicity for tilings of the plane, \textit{Inventiones Mathematicae}, \textbf{12}(1971), 177--209.

\bibitem{s04}
M. Senechal, The mysterious Mr. Ammann, \textit{The Mathematical Intelligencer}, \textbf{26} (2004), 10-21.

\bibitem{s93}
V. Sidorenko, Periodicity of one-dimensional tilings. In: A. Chmora, S.B. Wicker (eds), Error Control, Cryptology, and Speech Compression (ECCSP 1993). Lecture Notes in Computer Science, vol 829. Springer, Berlin, Heidelberg.  103-108.

\bibitem{smith23a} D. Smith, J. S. Myers, C. S. Kaplan, C. Goodman-Strauss, An aperiodic monotile. \textit{Combinatorial Theory}. \textbf{4}(1)(2024), \#6. arXiv:2303.10798 [math.CO]

\bibitem{smith23b} D. Smith, J. S. Myers, C. S. Kaplan, C. Goodman-Strauss, A chiral aperiodic monotile, arXiv:2305.17743 [math.CO]


\bibitem{s74}
S. K. Stein, Algebraic tiling, \textit{The American Mathematical Monthly}, \textbf{81} (1974), 445-462. 



\bibitem{wang61}
H. Wang, Proving theorems by pattern recognition-II, \textit{Bell System Technical Journal}, \textbf{40}(1961) 1-41.


\bibitem{w15} A. Winslow, An optimal algorithm for tiling the plane with a translated polyomino,
In: K. Elbassioni, K. Makino (eds), Algorithms and Computation (2015), Springer, Berlin, Heidelberg, 3-13.

\bibitem{yang23} C. Yang, Tiling the plane with a set of ten polyominoes, \textit{International Journal of Computational Geometry \& Applications}, \textbf{33}(03n04)(2023), 55-64.

\bibitem{yang23b} C. Yang, On the undecidability of tiling the plane with a set of $9$ polyominoes (in Chinese), (2024), to appear in \textit{SCIENTIA SINICA Mathematica}. \url{https://doi.org/10.1360/SSM-2024-0035}


\bibitem{yz24} 
C. Yang, Z. Zhang, Translational tiling with 8 polyominoes is undecidable, arXiv:2403.13472 [math.CO], to appear in \textit{Discrete \& Computational Geometry}. \url{https://doi.org/10.1007/s00454-024-00706-1}

\bibitem{yz24a}
C. Yang, Z. Zhang, Undecidability of tiling the plane with a fixed number of Wang bars, arXiv:2404.04504 [math.CO]

\bibitem{yz24b} 
C. Yang, Z. Zhang, Undecidability of translational tiling of the 3-dimensional space with a set of 6 polycubes, arXiv:2408.02196 [math.CO] to appear in \textit{Proceedings of the American Mathematical Society}


\bibitem{yz24c} 
C. Yang, Z. Zhang, Undecidability of translational tiling of the 4-dimensional space with a set of 4 polyhypercubes, arXiv:2409.00846 [math.CO] to appear in \textit{SCIENCE CHINA Mathematics}

\bibitem{yz24d} 
C. Yang, Z. Zhang, Undecidability of translational tiling with three tiles, arXiv:2412.10646 [math.CO]

\end{thebibliography}
\end{document}